\theoremstyle{plain}
\newtheorem{thm}{Theorem}[section]
\newtheorem{lem}[thm]{Lemma}
\newtheorem{prop}[thm]{Proposition}
\newtheorem{defin}[thm]{Definition}
\newtheorem{cor}[thm]{Corollary}
\newtheorem{conj}[thm]{Conjecture}
\theoremstyle{definition}
\newtheorem{rem}[thm]{Remark}
\numberwithin{equation}{section}
\def\rightharpoonupfill@{\arrowfill@\relbar\relbar\rightharpoonup}
\newcommand{\xrightharpoonup}[2][]{\ext@arrow 0359\rightharpoonupfill@{#1}{#2}}
\newcommand{\N}{\mathbb N} 
\newcommand{\R}{\mathbb R}
\newcommand{\diam}{{\rm diam}}
\renewcommand{\div}{{\rm div}}
\newcommand{\Hh}{{\mathcal H}}
\begin{document}
 
\title{A rigidity result for global Mumford-Shah minimizers in dimension three}
\author[A. Lemenant]{Antoine Lemenant}




\address[A. Lemenant]{Universit\'e Paris Diderot -- Paris 7, CNRS, UMR 7598 Laboratoire Jacques-Louis Lions, Paris, F-75005, France}
\email{lemenant@ljll.univ-paris-diderot.fr}

\date{\today}

\begin{abstract}\vspace{1cm} We study global Mumford-Shah minimizers  in $\R^N$, introduced by Bonnet as blow-up limits of Mumford-Shah minimizers. We prove a new monotonicity formula for the energy of $u$ when the singular set $K$ is contained in a smooth enough cone. We then use this monotonicity to prove that for any reduced global minimizer $(u,K)$ in $\R^3$, if $K$ is contained in a half-plane and touching its edge, then it is the half-plane itself. This  partially answers   to a question of Guy David. 
\end{abstract}
\maketitle

\vspace{2cm}
\tableofcontents
\newpage
\section{Introduction}

Global Mumford-Shah minimizers  was introduced first by Bonnet  \cite{b} in dimension 2  as blow-up limit of minimizers of the Mumford-Shah functional, and are crucial in the regularity result of \cite{b}. They are couples $(u,K)$, with $K\subset \R^N$ a closed set and $u\in \bigcap_{R>0}W^{1,2}(B(0,R)\setminus K)$, that locally minimizes in $\R^N$ the Mumford-Shah energy. More precisely,  for any ball $B\subset\R^N$, $(u,K)$ minimizes the sum  
$$\int_{B\setminus K}|\nabla u|^2dx + \mathcal{H}^{N-1}(K\cap B),$$
 among all compettiting couples $(u',K')$ that are equal to $(u,K)$ outside $B$, and that keep the topology of $K$ (see Definition \ref{defms} below for more details). The $k$-dimensional Hausdorff measure $\mathcal{H}^k$ generalizes the natural notion of $k$-area and will be precisely defined in Section \ref{secprel}.

The list of global minimizers is nearly complete in dimension 2: if $K$ is connected then it is either a line, a half-line, or three half-lines meeting at the origin by 120$^o$. Knowing this without assuming ``connected'' would solve the famous Mumford-Shah conjecture. Some improvements in this direction can be found in \cite{l} and \cite{dl}.

But on the other hand, almost nothing is known about global minimizers in dimension $N>2$. A first discussion is given in Chapter H.76.  of Guy David's book \cite{d},  and a few more results can be found in the paper \cite{lmg}.

For instance it is known in dimension 3 that, if $u$ is locally constant, then $K$ must be a minimal cone (i.e. a Plane, three half planes meeting by $120^o$, or a cone over the regular tetrahedron). This fact looks very natural but is surprisingly not obvious at all, and a rigorous proof is contained in \cite{dreg}. It is also known that a half-plane is a minimizer with the corresponding function $u=cracktip\times \R$, where the cracktip function is $\sqrt{2r/\pi}\sin(\theta/2)$ in polar coordinates. These are the only known examples of global minimizers in $\R^3$, and we refer to \cite{lmg} and \cite{d} for more details.

There is also counter-examples. For instance in \cite{lmg} it is proved that an angular sector cannot be a global minimizer, unless it is a half-plane or a plane. In \cite{merlet} a tentative construction for an extra global minimizer of particular type is proved to fail.

Notice that adding to $K$ a piece of set negligible for $\mathcal{H}^{N-1}$  does not change the fact of being a Mumford-Shah minimizer. Therefore, it is convenient to work with \emph{reduced} minimizers only, for which one cannot remove from $K$ more pieces of set of zero measure without being enable to extend the associated function $u$ as a $W^{1,2}$ function.   For instance, reduced minimizers enjoy nice properties as Ahlfors-regularity, or local $C^1$ regularity at points where the energy of $u$ is small enough, that non-reduced minimizers  can obviously not expect to have in general. As proved in \cite{d}, it is always possible to replace a minimizer by a reduced one (there is even a debate between 3 different possibilities, the one used in this paper is called ``topological reduction'' in \cite{d}), but anyway in the case when the  the global minimizer is coming from a blow-up procedure, then it is automatically reduced \cite[Proposition 40.9]{d}. We refer to \cite[Section 8]{d} for more details about reduced minimizers.

In this paper we focus our attention on the following conjecture which is stated in  \cite[Page 571]{d}.

\begin{conj}[Guy David] \label{conj1} Let $(u,K)$ be a reduced global minimizer in $\R^3$ such that $K$ is  contained in a plane and not empty. Then $K$ must be either a plane  or a half-plane.
\end{conj}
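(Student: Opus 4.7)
The strategy is to bootstrap the half-plane rigidity result announced in the abstract to the full conjecture via a blow-up argument combined with the new monotonicity formula. Assume without loss of generality $K\subset P:=\{x_3=0\}$ and $K\neq\emptyset$, and consider the relative boundary $\Gamma:=\partial_P K$ of $K$ inside $P$. If $\Gamma=\emptyset$, then $K$ is both open and closed in the connected plane $P$, and since $K\neq\emptyset$ this forces $K=P$, giving the first alternative. From now on assume $\Gamma\neq\emptyset$ and fix $x_0\in\Gamma$.

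Next, perform a blow-up at $x_0$: set $u_r(x)=r^{-1/2}u(x_0+rx)$ and $K_r=r^{-1}(K-x_0)$. By standard compactness of reduced global Mumford-Shah minimizers (and by \cite[Proposition 40.9]{d}, which ensures blow-up limits are reduced), some subsequence converges to a reduced global minimizer $(u_\infty,K_\infty)$ in $\R^3$ with $K_\infty\subset P$ and with $0$ on its relative boundary. Because $K$ sits inside the plane $P$, which is a smooth cone in the sense needed, the new monotonicity formula of the paper applies at $x_0$: the rescaled energy has a limit and the blow-up $(u_\infty,K_\infty)$ is a \emph{cone minimizer}. Hence $K_\infty$ is a two-dimensional cone lying inside the plane $P$, i.e.\ a union of rays from the origin. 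Planar cones are easy to classify, and the exclusion of proper sectors in \cite{lmg}, combined with $0\in\partial_P K_\infty$ and $K_\infty\neq P$, leaves $K_\infty$ contained in a half-plane with edge through $0$. Applying the main theorem of the paper to $(u_\infty,K_\infty)$ then yields $K_\infty=H$, a half-plane through the origin.

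It remains to transfer this blow-up rigidity back to $K$ itself. One option is to run the same analysis on a blow-down at infinity (still inside the plane $P$): the blow-down is again a planar cone, and by the same classification step it is either $P$ or a half-plane. In the remaining nontrivial case, the monotone rescaled energy takes the same value at $0$ and at $\infty$, so it is constant, and the equality case of the new monotonicity formula forces $(u,K)$ to be itself a cone centered at $x_0$, giving $K=H$. The main obstacle is precisely this classification/propagation step: one must rule out not only proper planar sectors but also more exotic planar cones as blow-ups, and ensure that the blow-ups at different points of $\Gamma$ (together with the blow-down) are compatible enough to assemble into a single global half-plane. This uniqueness-of-blow-ups issue, together with the possibility that the monotonicity formula of the paper is not strong enough to force the full equality case in this specific geometry, is exactly what separates the paper's main theorem from the complete conjecture.
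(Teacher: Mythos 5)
This is a conjecture, not a theorem, and the paper does \emph{not} prove it; the paper's own contribution (Theorem \ref{mainth}) only proves the special case where $K$ lies in a half-plane and meets its edge. Your plan is to bootstrap from that special case by blow-up, but the step on which the whole plan hinges fails, and it fails for exactly the reason the paper spells out at the end of its introduction.

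The fatal gap is the claim ``Because $K$ sits inside the plane $P$, which is a smooth cone in the sense needed, the new monotonicity formula of the paper applies at $x_0$.'' The monotonicity Lemma \ref{monotonie} gives a \emph{dimensional} exponent
$\alpha = \sqrt{(N-2)^2+4\gamma}-(N-2)$, and for the conclusion to say anything about blow-up or blow-down limits of a Mumford--Shah minimizer you need $\alpha\geq 1$, i.e.\ $\gamma\geq 3/4$ in $\R^3$. By Remark \ref{monotonieEi}, the first nonzero Neumann eigenvalue $\lambda_1(\partial B_r\setminus K)$ \emph{decreases} as $K\cap\partial B_r$ grows inside the equator, and by Proposition \ref{valuehalf} it equals exactly $3/4$ when $K\cap\partial B_r$ is a half-equator. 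If $K$ is only known to sit in a plane, then centered at a boundary point $x_0$ the trace $K\cap\partial B_r(x_0)$ can easily occupy more than a half-equator (already for large $r$, but even for small $r$: take $K=\{(x,y,0):y\geq-|x|\}$ and $x_0=0$), forcing $\lambda_1<3/4$ and hence $\alpha<1$. The paper is explicit that ``the monotonicity with $\alpha<1$ would be useless regarding to blow-in or blow-down limits.'' So you cannot even start: there is no exponent-$1$ monotone quantity, no finite positive density $f_0$, no homogeneous blow-up. Note also that the equality case of Lemma \ref{monotonie} only tells you $u$ is an eigenfunction on a.e.\ sphere; turning that into ``$K_\infty$ is a cone, indeed a half-plane'' in the paper's proof goes through the rigidity Proposition \ref{rigide}, which crucially assumes $K\subseteq$ half-equator.

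A secondary gap: even if one granted the monotonicity, ``planar cones are easy to classify'' is not supported. The cited result from \cite{lmg} rules out proper angular sectors, but a planar cone can be a union of several sectors (or rays), and nothing in the paper classifies those as non-minimizers. You do acknowledge gaps at the end, but you attribute them to ``the equality case not being strong enough'' and ``uniqueness of blow-ups,'' which misses the real obstruction the paper identifies: the eigenvalue bound $\lambda_1\geq 3/4$ itself breaks for $K$ in a plane but not in a half-plane. That is precisely why the paper can only prove Theorem \ref{mainth}, not Conjecture \ref{conj1}.
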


In this paper we give a partial answer to this conjecture. We prove  the following.

\begin{thm} \label{mainth} Let $(u,K)$ be a reduced global minimizer in $\R^3$ such that $K$ is  contained in a half-plane, which contains a point of $K$ on its edge. Then  $K$ must be the whole   half-plane.
\end{thm}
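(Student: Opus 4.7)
The plan is to place the distinguished edge point at the origin, so that we may take $H=\{(x,y,z):y=0,\ x\le 0\}$ with edge the $z$-axis. Since $H$ is itself a cone with vertex $0$, the monotonicity formula announced in the abstract (which requires $K$ to be contained in a sufficiently regular cone through the base point) applies to $(u,K)$ at $0$ and shows that the normalized energy
\[ E(r) = r^{-2}\Bigl[\int_{B(0,r)\setminus K}|\nabla u|^2\,dx + \Hh^{2}(K\cap B(0,r))\Bigr] \]
is monotone in $r>0$.

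Next I would carry out the standard blow-up and blow-down analysis at $0$. Ahlfors regularity of reduced minimizers together with the compactness of global Mumford-Shah minimizers (\cite{d}) ensure that the rescalings $u_r(x)=r^{-1/2}u(rx)$, $K_r=K/r$ admit subsequential limits both as $r\to 0^+$ and as $r\to +\infty$. Combined with the monotonicity of $E$, the classical argument that a reduced global minimizer whose normalized energy is constant on an interval must be a cone gives that these limits $(u_0,K_0)$ and $(u_\infty,K_\infty)$ are conic reduced global minimizers. Moreover $K_0, K_\infty\subset H$ because $H$ is invariant under $x\mapsto x/r$, and $0\in K_0$ because $0\in K$ is a reduced point of the original minimizer.

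The crucial classification step is to show that a reduced conic global minimizer $(v,L)$ in $\R^3$ with $L\subset H$ and $0\in L$ nontrivial must be the half-plane cracktip, i.e.\ $L=H$ and $v$ is the cracktip function extended trivially in the $z$-direction. The other known minimal cones (a full plane, three half-planes meeting at $120^\circ$, and the tetrahedron cone) do not fit inside a half-plane; a cone $L\subsetneq H$ whose trace on the unit sphere is a proper subarc of the half-great-circle $H\cap S^2$ is an angular sector in $H$, and all such sectors (including those attached to the edge) are excluded by the non-minimality result of \cite{lmg}, combined if necessary with one more application of the new monotonicity formula to the candidate cone itself. Finite unions of such subarcs are ruled out similarly.

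Once the classification is in hand, $K_0=K_\infty=H$, so $E(0^+)=E(+\infty)$ equals the cracktip--half-plane density $\alpha$. By monotonicity, $E(r)\equiv\alpha$ on all of $(0,+\infty)$, and the equality case of the monotonicity formula implies that $(u,K)$ is already a cone with vertex $0$; since $(u,K)$ must then coincide with its own blow-up at $0$, we conclude $K=H$. The main obstacle lies in the classification step: rigorously ruling out all proper conic subsets of $H$ containing $0$ is the heart of the argument, and is where I expect the bulk of the technical work, exploiting the new monotonicity formula beyond what is already provided by the sector non-minimality of \cite{lmg}.
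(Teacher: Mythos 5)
Your overall scaffold (center at the edge point, prove a monotonicity formula, take blow-up and blow-down limits, classify them, and then pin down the original $(u,K)$ via the equality case) is indeed the paper's strategy, but there are three substantive gaps in the way you carry it out.

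First, the monotone quantity is wrong. You propose monotonicity of the full Mumford--Shah density
$E(r)=r^{-2}\bigl[\int_{B_r\setminus K}|\nabla u|^2\,dx+\Hh^2(K\cap B_r)\bigr]$.
No such monotonicity is established (and it is not known for Mumford--Shah minimizers in this generality). What the paper proves, and what the Alt--Caffarelli--Friedman mechanism actually yields when $K$ sits inside a cone (so that the key boundary term $x\cdot\nu$ vanishes on $K$), is monotonicity of the weighted \emph{Dirichlet} energy
$\varphi(r)=\frac{1}{r}\int_{B_r\setminus K}\frac{|\nabla u|^2}{|x|}\,dx$
(Lemma \ref{monotonie} with $N=3$, $\gamma=3/4$, hence $\alpha=1$). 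This distinction matters: the whole equality analysis is spectral, in terms of eigenvalues of the Neumann Laplacian on $\partial B_r\setminus K$, and your $E(r)$ carries no such information.

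Second, you do not show that the blow-up limit is nontrivial. Hausdorff convergence gives $0\in K_0$, but this alone is compatible with the limit having vanishing Dirichlet energy (e.g.\ $u_0$ locally constant), in which case the whole argument collapses. The paper establishes $f_0=\lim_{r\to0}\varphi(r)>0$ by invoking the $\varepsilon$-regularity theorem of \cite{lreg3D} (Theorem \ref{regularity}): if the normalized energy were small, $K$ near $0$ would be the $C^{1,\alpha}$ image of a minimal cone containing $0$ in its interior, which is impossible for a point on the edge of a half-plane containing $K$. This step is essential and is precisely why the hypothesis ``$K$ touches the edge'' is used; your sketch omits it entirely.

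Third, the classification of limits as you propose it would not close. The blow-up $K_0$ is a cone contained in the half-plane $H$, but its trace $K_0\cap\mathbb{S}^2$ is an arbitrary closed subset of the half-great-circle --- possibly a Cantor-type set --- not merely a subarc or finite union of subarcs. The sector non-minimality result of \cite{lmg} (and the short list of minimal surface cones) does not rule out such sets, and ``one more application of the monotonicity formula'' does not either, unless one already has the spectral information. The paper instead uses the \emph{equality case} of Lemma \ref{monotonie}: constancy of $\varphi$ forces $\lambda_1(\partial B_R\setminus K_0)=3/4$ for a.e.\ $R$, and then Proposition \ref{rigide} (a spectral rigidity statement for closed subsets of the half-equator with $\lambda_1=3/4$) forces $K_0\cap\mathbb{S}^2$ to be the \emph{entire} half-equator. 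Only afterward does Proposition 19 of \cite{lmg} identify $u_0$ as the cracktip $\times\,\R$ function. You correctly identify the classification as the heart of the matter, but the route you gesture at does not reach it; the missing ingredient is the spectral rigidity on the sphere.
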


The major obstacle which prevents us to prove the full conjecture will be explain a the end of the introduction. It comes from geometrical requirements to control the first eigenvalue of the Neumann-Laplace-Beltrami operator  on spherical domains in order to get a monotonicity formula for the normalized energy, with the right power. We  are able to control the first eigenvalue of $\mathbb{S}^{N-1}\setminus K$ when $K$ is contained in a half-plane, but not larger.

We stress that in dimension $N=2$, similar results have been obtained  by J-C L\'eger \cite{l}. For instance Proposition 25 in \cite{l} says that  when $N=2$, if $K$ is contained in a line then $K$ is either the whole line, a half-line, or the empty set.  The proof of L\'eger  relies strongly on complex analysis. In particular the beautifull (and planar) formula showing the link between the set $K$ and the complex derivative of $u$ is one of the key ingredient in \cite{l}.  The second main ingredient is the monotonicity formula of Bonnet \cite{b}.

Therefore,  the challenge in Conjecture \ref{conj1} is to find a new strategy in higher dimension, without complex analysis, and prioritary try to find a monotonicity formula like the one of Bonnet valid in higher dimensions. The latter being also a question in page 572 of \cite{d}.

In the present paper we indeed propose a monotonicity formula  to replace the one of Bonnet. It relies on  the famous argument from Alt-Caffarelli-Friedman, and is obtained here for solutions of the Neumann problem in the complement of a set $K\subseteq \R^N$ contained in a cone. According to us, this monotonicity result  is probably as interesting as our main result itself since, as far as we know, it is the first time that some monotonicity is shown for the Neumann problem in dimension $N>2$. On the other hand to be valid, the singular set $K$ must be contained in a cone, which could be slightly restrictive but turns out to be exactly the case in Conjecture \ref{conj1}.

Let  us  recall the monotonicity formula of Bonnet \cite{b} which says that if $K\subset \R^2$ is a closed connected set and $u$ is a local  energy minimizer in $\R^2\setminus K$ (i.e. $u$ is harmonic with Neumann boundary conditions on $K$), then
$$r\mapsto \frac{1}{r}\int_{B_r\setminus K}|\nabla u|^2 dx$$ 
is nondecreasing (see also \cite[Section E.47]{d} or \cite{cl}). The proof of Bonnet seems not to be directly extendable in higher dimensions.

On the other hand, using a classical argument from Alt-Caffarelli-Friedmann \cite{acf}, it is easy to show that, for the Dirichlet problem, if the first eigenvalue of the Dirichlet Laplacian is uniformly bounded from below by $(2N-3)/4$ on $\partial B_r \setminus K$ for a.e. $r>0$, then
$$r\mapsto \frac{1}{r}\int_{B_r\setminus K}\frac{|\nabla u|^2}{|x|^{N-2}} dx$$
is nondecreasing. See for instance  \cite[Lemma 16]{lms} for a proof. Notice that for $N=2$ we recover exactly Bonnet's energy.

Both in  Bonnet \cite{b}  and Alt-Caffarelli-Friedman \cite{acf}, the monotonicity formula is established through an integration by parts which transforms the energy $f(r)=\int_{B_r}|\nabla u|^2/|x|^{N-2}dx$  as an integral over $\partial B_r$, and then estimate the later by Poincar\'e-Wirtinger inequality to get $f(r)\leq Cf'(r)$, where the constant $C$  is related to eigenvalue on the sphere.

Now if one tries to use the proof of Alt-Caffarelli-Friedmann \cite{acf} for the Neumann problem in $\R^N \setminus K$, while performing the integration by parts,  a  boundary term of type 
\begin{eqnarray}
\int_{K\cap B_r} u^2 \frac{\partial}{\partial \nu}\left(\frac{1}{|x|^{N-2}}\right) d\mathcal{H}^{N-1}
\end{eqnarray}
remains, which a priori prevents us to prove any monotonicity (here the integration over $K$ has to be understood as an integration on both sides of $K$ and $u$  may take a different value on each side). Of course this term does not occur  when $u=0$ on $K$, which happens for the Dirichlet problem.  

Now the key point in our proof relies on the following simple observation: if for instance $K$ is contained in a vectorial $N-1$-plane, then the normal vector on $K$ at point $x$ must be orthogonal to the vector $x$ itself. By consequence we have $\frac{\partial}{\partial \nu}(|x|^{2-N}) =(2-N)|x|^{-N}x\cdot \nu=0$ on $K$, and  the above extra term disappears. So follows the monotonicity, and it works the same way if $K$ is contained in a more general $N-1$ dimensional cone.

Moreover, as for Bonnet's monotonicity formula,  the  argument here is sharp too, in the sense that constant normalized energy happens if and only if $u$ is positively homogeneous of right degree.

Even if the proof of Theorem \ref{mainth} needs the monotonicity result for  half-planes only (and $N=3$), we will prove it in any dimension for more general cones  that we will call Neumann cones, and that we define as follows.

\begin{defin} \label{definNcone}If $N\geq 3$, a closed set $K\subset \R^N$ will be called a Neumann cone if the three following properties hold:
\begin{enumerate}
\item $K$ is a cone.
\item  $K\cap \mathbb{S}^{N-1}$ is $(N-2)$-rectifiable.
\item The embedding $W^{1,2}(\mathbb{S}^{N-1}\setminus K)\to L^2(\mathbb{S}^{N-1})$ is compact.
\end{enumerate}
If $N=2$ we keep the same definition but  replacing $(2)$ by 
\begin{enumerate}
\item[(2)']  $K\cap \mathbb{S}^{1}$ is a finite number of singletons.
\end{enumerate}
\end{defin}

  It is easy to check that a vectorial hyperplane in $\R^N$ is a Neumann cone, or that a half-hyperplane is a Neumann cone. We recall that for instance, the so-called cone property guarantees the compactness of   $W^{1,2}(\Omega)\to L^2(\Omega)$ (see \cite[Theorem 6.2. p.144]{adams}).  Moreover, when $K$ is a Neumann cone then the first positive eigenvalue of the Neumann Laplacian, denoted by  $\lambda_1(\mathbb{S}^2\setminus K)$, is well defined. This is also the case for any set $K$ contained in a Neumann cone. We refer to Section \ref{neumanncones} for the exact definition of rectifiability and basic properties of Neumann cones.

Now here is an example of statement for the monotonicity result (see Lemma \ref{monotonie} for the general case where assumption (2) of Lemma \ref{monotonie0}  is replaced by a more general topological assumption). We  call energy minimizer in $\R^N\setminus K$, a function $u$ that locally minimizes $\int |\nabla u|^2dx$ in $\R^N\setminus K$ (see Definition \ref{locdirmin}). This function is harmonic in $\R^N \setminus K$ with a Neumann boundary condition on $K$, i.e. zero normal derivative on $K$, in a weak sense.

\begin{lem} \label{monotonie0} Let $K \subset \R^N$ be a closed set satisfying the following assumptions.
\begin{enumerate}
\item $K$ is contained in a Neumann cone.
\item $\partial B_r\setminus K$ is connected for a.e. $r>0$.
\item $\exists \gamma>0$ s.t. for a.e. $r>0$, the first positive eigenvalue of the Neumann-Laplace-Beltrami operator on $\partial B_r\setminus K$ satisfies $\lambda_1(\partial B_r\setminus K)\geq \gamma/r^2$. 
\end{enumerate}
Then for every local energy minimizer $u$ in $\R^N \setminus K$ we have that
\begin{eqnarray}
\varphi:r\mapsto \frac{1}{r^{\alpha}} \int_{B_r}\frac{|\nabla u|^2}{|x|^{N-2}} \; dx \label{defphi}
\end{eqnarray}
is nondecreasing, where $\alpha$ is defined through
$$\alpha=\alpha(N,\gamma)=\sqrt{(N-2)^2+4\gamma}-(N-2).$$
Moreover, if $\varphi(r)$ is constant on an interval $[a,b]$ then for a.e. $r\in [a,b]$  the value $\gamma / r^2$ is the first positive eigenvalue for  the Neumann-Laplace-Beltrami operator on $\partial B_r \setminus K$ and the restritction of $u$ on $\partial B_r \setminus K$ is an associated eigenfunction.
\end{lem}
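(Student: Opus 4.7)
The plan is to adapt the Alt-Caffarelli-Friedman strategy to the Neumann setting. Setting $f(r):=\int_{B_r\setminus K}|\nabla u|^2|x|^{2-N}\,dx$, monotonicity of $\varphi=r^{-\alpha}f$ is equivalent to the differential inequality $\alpha f(r)\le rf'(r)$ for a.e.\ $r$, which is what I would establish.

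The first key ingredient is an integration-by-parts identity, obtained by applying Green's second identity to the pair $(u^2/2,\,|x|^{2-N})$ on the annulus $B_r\setminus(K\cup B_\epsilon)$ and letting $\epsilon\to 0$. Since $u$ is harmonic on $\R^N\setminus K$, $\Delta(u^2/2)=|\nabla u|^2$, while $|x|^{2-N}$ is harmonic off the origin. The Neumann condition $\partial_\nu u=0$ on $K$ kills the $\partial_\nu(u^2)$ contribution on $K$, and the cone assumption is precisely what kills the remaining term $\int_K u^2\,\partial_\nu(|x|^{2-N})$: at $\mathcal{H}^{N-1}$-a.e.\ $x\in K$, the approximate tangent plane to $K$ agrees with that of the ambient Neumann cone and so contains the radial ray $\R x$, giving $\nu\cdot x=0$ and $\partial_\nu(|x|^{2-N})=(2-N)|x|^{-N}\,\nu\cdot x=0$. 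Subtracting from $u$ the constant $c:=\lim_{r\to 0}|\partial B_r|^{-1}\int_{\partial B_r}u\,d\mathcal{H}^{N-1}$ makes the boundary term at $\partial B_\epsilon$ vanish in the limit, and writing $\tilde u:=u-c$ yields
\begin{equation*}
f(r)=r^{2-N}\!\int_{\partial B_r\setminus K}\!\tilde u\,\partial_r u\,d\mathcal{H}^{N-1}\;+\;\tfrac{N-2}{2}\,r^{1-N}\!\int_{\partial B_r\setminus K}\!\tilde u^{\,2}\,d\mathcal{H}^{N-1}.
\end{equation*}

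Next, I would invoke Poincaré--Wirtinger. Using the divergence theorem together with $\Delta u=0$ in $\R^N\setminus K$ and $\partial_\nu u=0$ on $K$ gives $\int_{\partial B_r\setminus K}\partial_r u\,d\mathcal{H}^{N-1}=0$, so the spherical mean of $u$ is constant in $r$ and equal to $c$; hence $\tilde u$ has zero mean on each $\partial B_r\setminus K$. Assumption $(2)$ makes $0$ a simple Neumann eigenvalue, so assumption $(3)$ gives $\int\tilde u^{\,2}\le(r^2/\gamma)\int|\nabla_T u|^2\,d\mathcal{H}^{N-1}$. Writing $f'(r)=r^{2-N}\int_{\partial B_r\setminus K}((\partial_r u)^2+|\nabla_T u|^2)\,d\mathcal{H}^{N-1}$ and applying Young with weight $\tau=2r/\alpha$ (that is, $\tilde u\,\partial_r u\le(r/\alpha)(\partial_r u)^2+(\alpha/4r)\tilde u^{\,2}$), a direct substitution exploits the defining algebraic relation $\alpha^2+2(N-2)\alpha=4\gamma$ to make both inequalities saturate simultaneously, yielding exactly $\alpha f(r)\le rf'(r)$. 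The equality statement then comes for free: if $\varphi$ is constant on $[a,b]$, both Young and Poincaré--Wirtinger must be tight a.e., which forces $\tilde u|_{\partial B_r\setminus K}$ to be a first Neumann eigenfunction with eigenvalue precisely $\gamma/r^2$ (and a posteriori $u$ is positively $(\alpha/2)$-homogeneous).

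The main obstacle I expect will be the $\epsilon\to 0$ limit in the integration by parts, because $|x|^{2-N}$ is singular at the origin and $K$ may pass through $0$, so $u$ has no classical regularity there. The delicate point is proving that the boundary term $(N-2)\epsilon^{1-N}\!\int_{\partial B_\epsilon\setminus K}\tilde u^{\,2}\,d\mathcal{H}^{N-1}$ tends to $0$; this is where the identification of $c$ with the Lebesgue value at the origin (justified by the mean-value property above) is crucial, and where careful trace estimates for $W^{1,2}$ functions on small spheres must be carried out. Once this technicality is dispatched, the remainder of the argument is a clean algebraic computation.
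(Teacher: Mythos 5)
Your proposal captures the conceptual skeleton of the paper's proof: the key cone observation $\nu_x\cdot x=0$ which kills the term $\int_K u^2\,\partial_\nu(|x|^{2-N})$, the Wirtinger inequality from assumption (3), the Young inequality with the correct weight, and the algebraic identity $\alpha^2+2(N-2)\alpha=4\gamma$ closing the argument. The observation that the spherical mean of $u$ is $r$-independent is also a nice simplification (the paper simply subtracts the $r$-dependent average $m_i(r)$ on each component). However, the implementation of the integration by parts is where your plan genuinely diverges from the paper and where you have a real gap. You excise a small ball $B_\epsilon$ and apply classical Green's identity on the annulus $B_r\setminus(K\cup B_\epsilon)$, so you must prove that the two boundary terms $\epsilon^{2-N}\int_{\partial B_\epsilon}\tilde u\,\partial_r u$ and $\epsilon^{1-N}\int_{\partial B_\epsilon}\tilde u^2$ vanish as $\epsilon\to 0$. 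You correctly flag this as the main obstacle but do not dispatch it, and it is not a mere technicality: a priori $u\in W^{1,2}(B_1\setminus K)$ only gives $\nabla u\in L^2$, and the finiteness of $\int_{B_1}|\nabla u|^2|x|^{2-N}\,dx$ --- which your limiting argument would implicitly require --- is itself a nontrivial output of the monotonicity formula, explicitly noted as such in the paper's Remark after Lemma \ref{IPP}. Your plan thus risks circularity unless one argues carefully along a subsequence of $\epsilon$'s and then propagates to all $r$.

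The paper avoids this obstacle entirely by regularizing: it replaces $|x|^{2-N}$ by the globally smooth, superharmonic function $h_\varepsilon=(|x|^2+\varepsilon)^{(2-N)/2}$, so there is no singularity at the origin and no small ball to excise; one then lets $\varepsilon\to 0$ at the very end by monotone convergence. (Superharmonicity rather than harmonicity is what gives the inequality form of \eqref{amontrer}, which is all that is needed.) A second issue you do not address is that classical Green's identity is not available on the non-smooth boundary piece $K$: $K$ is merely rectifiable, and the rigorous justification requires the SBV Gauss--Green formula of Lemma \ref{lemIPP}, applied to the SBV vector field $u^2\nabla h_\varepsilon$, together with a variational argument for the first integration by parts (using that $u$ is an energy minimizer rather than a classical solution). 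Without those two ingredients --- the regularization of $|x|^{2-N}$ and the SBV integration-by-parts lemma --- your argument, while morally correct, does not close.
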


For instance when $N=3$, the eigenvalue $\gamma=3/4$ provides a monotonicity with power $\alpha=1$. The monotonicity is then employed to characterize blow-up and blow-in limits of a global minimizer when $K$ is contained in a half-plane,  and this is the key ingredient to prove Theorem \ref{mainth}.

The assumption of existing a point of $K$ on the edge of the half-plane is here the guarantee the existence of a point at which the normalized energy (i.e. the quantity denoted by $\varphi(r)$ in Lemma \ref{monotonie0} with $\alpha=1$) will not converge to $0$ when $r$ goes to zero. At this step of the proof we shall need  the regularity result  of \cite{lreg3D}. For an arbitrary closed $K$ contained in a half-plane, it is tempting to try to find a point on the edge of another suitable half-plane  by moving the first one. But notice that for instance the closed set $K:=\{(x,y,0) \in \R^3 ; y\geq e^{-|x|}\}$  is contained in the half-plane $\{z=0 \text{ and } y\geq -1\}$ but does not touch the edge of any  half-plane containing it, thus we cannot avoid our assumption using this kind of strategy.

It is well known that the natural power in the normalized energy of   a Mumford-Shah minimizer is indeed $\alpha=1$, because of the standard energy estimate $\int_{B_r}|\nabla u|^2dx \leq Cr^{N-1}$. To this respect, let us emphasis  that regarding to the exponent $\alpha$ in the monotonicity formula, the situation in dimension 3 is pretty much different than the situation in dimension 2. This is due to the fact that $\lambda_1(\mathbb{S}^2\setminus K)$  is decreasing with respect to increasing cracks on the sphere, at the contrary of what happens  in dimension 2,  where $\lambda_1(\mathbb{S}^1\setminus K)$  increases  while removing increasing arc of circles $K$ (see Remark  \ref{monotonieEi}  and \ref{monotonieEi2} for more details).

By consequence, in dimension 2, the more $K\cap \partial B(0,r)$ is large, better is the decay of energy, and you always have $\alpha\geq1$ as soon $K$ meets $\partial B(0,r)$ at least at one point (for instance when $K$ is connected and $r$ is small enough).  In dimension 3, it is not so easy  to obtain $\alpha\geq 1$, because to get  this  you need $\lambda_1\geq 3/4$ on the sphere, which is  not trivially verified in practice. It depends  on the geometry of the set $K\cap B(0,r)$, which can now be complicated even with zero $\mathcal{H}^{N-1}$-measure, whereas consisting essentially in a finite number of points in dimension 2.

But the situation is even worst than that:   if the value $3/4$ is achieved for some $K\cap \partial B(0,r)$ (for instance when $K$ is a half-plane), then any larger set $K'\supset K$ (with zero $\mathcal{H}^{N-1}$-measure on the sphere) would fail to have $\lambda_1\geq 3/4$, and consequently the larger set would imply a smaller exponent $\alpha$. This phenomenon is somehow counter-intuitive regarding to what happens in dimension 2.

All the above explains  why we have the restriction of $K$ being contained in a half-plane in the statement of Theorem \ref{mainth}. By this way we guarantee that assumption (3) of Lemma \ref{monotonie0} is satisfied with $\gamma=3/4$, thus the monotonicity holds with $\alpha=1$. If $K$ was larger than a half-plane then we would have $\gamma<3/4$ and the monotonicity with $\alpha<1$ would be useless regarding to blow-in or blow-down limits of a Mumford-Shah minimizer.

\section{Preliminaries}
\label{secprel}

We will denote by $\mathscr{L}^N$ the Lebesgue measure on $\R^N$. Sometimes we use the notation $|A|$ for $\mathscr{L}^N(A)$ and the symbol $dx$ in a integral will mean $d\mathscr{L}^N(x)$, and will be sometimes omitted if no confusion arizes. We denote   by $B(x,R)$ the open ball of center $x$ and radius $R$. The closed ball will be denoted by $\overline{B}(x,R)$, and when $x=0$ we will sometimes simply use the notation $B_R$ and $\overline{B}_R$. The topological boundary of a set $A\subset \R^N$ will be denoted by  by $\partial A$, and the unit sphere of $\R^N$ will be denoted either by $\partial B(0,1)$ or $\mathbb{S}^{N-1}$.

Let $E$ be a subset of $\R^N$. The $k$-dimensional Hausdorff measure of $E$ is
\begin{eqnarray}\label{defHH}
\Hh^k(E)=\lim_{\tau \to 0^+} \Hh^k_{\tau}(A), \notag
\end{eqnarray}
where
\begin{eqnarray}
 \Hh^k_{\tau}(A):=\inf \left\{ c_k\sum_{i=1}^{+\infty} \diam(E_i)^k \quad ; \quad E\subset
\bigcup_{i=1}^{+\infty}E_i \; \text{ and } \; \diam(E_i)\leq \tau\right\} ,\notag
\end{eqnarray}
and $c_k=\mathscr{L}^k(B(0,1))/2^k$. It is well known that   $\mathcal{H}^k$ is an outer measure on $\R^N$ for which the Borel sets are measurable sets. Moreover, its restriction to $k$-dimensional spaces coincides with the  Lebesgue measure $\mathscr{L}^k$ for $k\geq 1$, and $\mathcal{H}^0$ is the standard counting measure.

Sometimes we will need to integrate over spheres of $\R^N$, and this will mean endowed with the  Hausdorff measure $\mathcal{H}^{N-1}$ but  sometimes abbreviated by $d\omega$, or even omitted when there is no possible ambiguity.

\begin{defin}\label{locdirmin} Let $K\subset \R^N$ be a closed set and  $u \in \bigcap_{R>0} W^{1,2}(B_R\setminus K)$. We say that $u$ is a local energy minimizer in $\R^N\setminus K$ if for every $R>0$,  $u$ is a solution for the problem
$$\min \left\{ \int_{B_R\setminus K} |\nabla v|^2dx ; v\in W^{1,2}(B_R\setminus K) \text{ and } v=u \text{ on }\partial B_R\setminus K \right\}.$$
\end{defin}

\begin{rem} It is standard to check that $u$ will satisfy $-\Delta u =0$ in $B_R\setminus K$, and $\frac{\partial u}{\partial \nu}=0$ on $K$, in a weak sense. In particular $u$ is a $C^\infty$ function outside $K$. 
\end{rem}

We introduce the set of admissible pairs
$$\mathcal{A}:=\left\{(u,K) ; \; K\subseteq \R^N \text{ is closed and } u \in \bigcap_{R>0} W^{1,2}(B_R\setminus K)\right\}.$$

Following \cite{b} and \cite{d} we introduce the following definitions.

\begin{defin} Let   $(u,K) \in  \mathcal{A}$ and $B\subset \R^N$ be a ball. A competitor  for the pair  $(u,K)$ in the ball $B$ is a pair
$(v,L) \in \mathcal{A}$ such that
$$
\left.
\begin{array}{c}
u=v \\
K=L
\end{array}
\right\} \text{ in } \R^N \setminus  B
$$
and satisfying furthermore the following topological condition:  if $x$ and $y$ are two points in
$\R^N \setminus( B \cup K) $ that lie in different connected components of  $\R^N\setminus K$, then they also lie in different connected components of $\R^N\setminus L$.
\end{defin}

We can now define what is a global minimizer.

\begin{defin}\label{defms} A global minimizer in $\R^N$  is a pair $(u,K)\in \mathcal{A}$
such that for every ball $B$  in $\R^N$ and every
competitor $(v,L)$ in $B$ we have
$$\int_{B \backslash K}|\nabla u|^2dx +\mathcal{H}^{N-1}(K\cap B)\leq \int_{B\backslash L}|\nabla v|^2dx+\mathcal{H}^{N-1}(L\cap B).$$
The minimizer $(u,K)$ is called reduced if whenever  $(\tilde K, \tilde u)$ is a competitor in a ball $B$ with  $\tilde K\subset K$  and $\tilde u$ is an extension of $u$ in $W^{1,2}(B\setminus \tilde K)$ then $\tilde K=K$.
\end{defin}

\begin{rem} In particular, if $(u,K)$ is a  global minimizer in $\R^N$ then $u$ is a local energy minimizer in $\R^N\setminus K$.
\end{rem}

We end this section with a regularity result that will be needed later. We recall that a minimal cone  in $\R^3$ is a set belonging to the following list.
\begin{itemize}
\item A plane.
\item A union of three half-planes meeting along their edges with $120^o$.
\item  A cone over the edges of a regular tetrahedron.
\end{itemize}

We shall need the use of the following 3-dimensional ``$\varepsilon$-regularity'' result. There is no comparable theorem in higher dimensions (essentially because the minimal cones for $N\geq 4$ are unknown), and this is the  principal reason why our main result will be proved in dimension 3 only.

\begin{thm}\cite[Theorem 9]{lreg3D}  \label{regularity}We can find some absolute positive constants $\varepsilon_0>0$, $\alpha\in (0,1)$ and $c<1$ such that the following holds. Let $(u,K)$ be a reduced global minimizer in $\R^3$. Let $x\in K$ and $r>0$ be such that 
$$ \frac{1}{r^2}\int_{B(x,r)\setminus K}|\nabla u|^2 dx\leq \varepsilon_0.$$
Then there is a diffeomorphism $\phi$ of class $C^{1,\alpha}$ from $B(0,cr)$ to its image, verifying $\phi(0)=x$,   $|\phi(y)-(y+x)|\leq 10^{-3}cr$, and there is a
minimal cone $Z$ such that $K\cap B(x,cr)=\phi(Z)\cap B(x,cr)$.
\end{thm}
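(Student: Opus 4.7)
The strategy is a standard $\varepsilon$-regularity scheme for Mumford--Shah minimizers via compactness, classification of blow-ups, and an iteration argument. I would argue by contradiction: suppose the conclusion fails, so there exist reduced global minimizers $(u_n,K_n)$, points $x_n\in K_n$, and radii $r_n>0$ such that $r_n^{-2}\int_{B(x_n,r_n)\setminus K_n}|\nabla u_n|^2\,dx\le 1/n$, yet none of the claimed diffeomorphisms exist.

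After translating by $x_n$ and rescaling by $r_n$, I may assume $x_n=0$ and $r_n=1$. The first step is a compactness result for reduced Mumford--Shah minimizers (Ahlfors regularity, uniform rectifiability and stability of the minimality property under Hausdorff convergence of $K_n$ and weak $W^{1,2}$ convergence of $u_n$, as developed in \cite{d}). Extract a subsequence such that $K_n\to K$ locally in Hausdorff distance and $u_n\wto u$ in $W^{1,2}_{\mathrm{loc}}$, with $(u,K)$ a reduced global minimizer. The energy bound passes to the limit and yields $\int_{B(0,1)\setminus K}|\nabla u|^2\,dx=0$, so $u$ is locally constant on every connected component of $\R^3\setminus K$. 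By the theorem of David \cite{dreg} already recalled in the introduction, $K$ is then a minimal cone (a plane, a $Y$-set, or a $T$-set) passing through $0$.

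The second step is the known $C^{1,\alpha}$ regularity for reduced Mumford--Shah minimizers whose singular set is sufficiently close to a minimal cone. This has been established cone by cone by David: for points modeled on a plane one combines Bonnet's monotonicity of normalized energy with a decay estimate to bootstrap from Reifenberg flatness to $C^{1,\alpha}$; for $Y$-points and $T$-points the argument is analogous but requires a careful topological competitor construction to force the three sheets (or six sheets) of $K_n$ to remain in the correct combinatorial configuration. Applied to our approximation $K_n\to K$, this gives, for $n$ large, a $C^{1,\alpha}$ diffeomorphism $\phi_n$ of $B(0,c)$ sending the model cone $Z=K$ onto $K_n$ in $B(0,c)$, with $\|\phi_n-\mathrm{id}\|_\infty$ arbitrarily small. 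Undoing the rescaling contradicts our initial assumption and completes the argument.

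The main obstacle is the second step: proving the local $C^{1,\alpha}$ regularity near the non-planar minimal cones. Near a plane one has full Lipschitz graph control from a Reifenberg-type flatness, combined with a decay of the normalized Dirichlet energy coming from monotonicity; but near $Y$- and $T$-cones the analysis must track three (resp.\ six) sheets simultaneously, the topological competitor condition becomes delicate, and the monotone quantity controlling the oscillation of tangent planes is subtler. This is precisely where the assumption $N=3$ is essential, because the full list of minimal cones is known only in this dimension, as emphasized in the paragraph preceding the statement.
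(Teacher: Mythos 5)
This theorem is not proved in the paper at all: it is imported verbatim as \cite[Theorem 9]{lreg3D}, so there is no ``paper's own proof'' to compare against. Your sketch therefore describes your guess at the argument carried out in the cited reference, and on that level the high-level contradiction/compactness scheme is the right genre, but it does not actually close.

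The difficulty is that your ``second step'' is essentially the theorem you are trying to prove. You write that one should apply ``the known $C^{1,\alpha}$ regularity for reduced Mumford--Shah minimizers whose singular set is sufficiently close to a minimal cone'' to the rescaled minimizers $(u_n,K_n)$ once their $K_n$ are Hausdorff--close to a minimal cone. But the $\varepsilon$-regularity near the non-planar cones ($Y$ and $T$) is precisely the content of \cite{lreg3D}; David's book only furnishes the analogue near the plane and, separately, near the $Y$-cone. Quoting it here is circular. A genuine proof needs to carry out, not invoke, the decay scheme: one must prove a one-step improvement (if $K$ is $\beta$-close to some minimal cone of the right type in $B(x,r)$ and the normalized energy is below some $\varepsilon$, then in $B(x,\theta r)$ the flatness improves to $\beta/2$ and the normalized energy decays by a fixed factor), iterate this at all scales and nearby points, then upgrade the resulting uniform Reifenberg-type estimate to a $C^{1,\alpha}$ parametrization. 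The topological competitor construction needed to obtain the energy decay near $Y$- and $T$-points is exactly the delicate part you flag as a difficulty; acknowledging it is not the same as supplying it.

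There is also a quantitative gap in step one. Compactness gives you, for a given $\eta>0$, an $n_\eta$ so that $K_n\cap B(0,1/2)$ is within $\eta$ of a minimal cone \emph{at the unit scale only}, for $n\geq n_\eta$. The bootstrap in the second step requires closeness to a (possibly varying) minimal cone at all balls $B(y,\rho)\subset B(0,c)$ with $y\in K_n$, not just at the top scale. This multi-scale control is not a byproduct of the single compactness argument; it must be propagated down in scale via the decay estimate itself (together with a ``stability of type'' argument: near a $T$-point one must rule out that a smaller ball looks like a $Y$ or a plane without the energy having dropped accordingly). Without these ingredients the sketch cannot produce the uniform $C^{1,\alpha}$ diffeomorphism $\phi$ claimed in the statement.
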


\section{Blow-up and Blow-in Limits of minimizers}

In this section we recall some known facts about  blow-up or blow-in limits of Mumford-Shah minimizers. Extracting sequences which converge to a minimizers in a fairly weak sense is not very difficult. The most difficult part is to show that this convergence holds strongly in $L^2_{loc}(\R^N)$ for the gradients.

If $K$ and $K'$ are two compact subsets of $\R^N$   the Hausdorff distance between $K$ and $K'$ is 
$$d_H(K,K')=\max\left(\sup_{x\in K} dist(x,K'), \sup_{x\in K'}dist(x,K)\right).$$
when a sequence of compact sets $K_n$ is said to converge to a set $K$, it will always refer to the convergence in the sense of the Hausdorff distance.

Subsequently,  the convergence of a sequence of minimizers $(u_k,K_k)$ will be understood in the following way.

\begin{defin} \label{convergence} We say that a sequence of couples $(u_k,K_k)\in \mathcal{A}$ converges to some $(u_0,K_0) \in \mathcal{A}$ if the following holds.
 \begin{enumerate}
\item $K_{k}\cap \overline{B}_M\to K_0\cap \overline{B}_M$ for every $M>0$. 
\item  for any connected component $U\subseteq \R^N \setminus K_0$, there exists a sequence of numbers $a_k$ such that  $u_{k}-a_k$ converges to $u_0$  strongly in $L^1(H)$, for every compact set $H\subset U$.
\item  $\nabla u_{k}\to \nabla u_0$  strongly in $L^2(B_M)$, for every $M>0$.
\end{enumerate}
\end{defin}

 The following statement summarizes the properties that we shall need later. It is mainly coming from gluing together several Propositions contained in the  book \cite{d},  applied to the special case of global minimizers.

\begin{thm} \label{blowup} Let $(u,K)$ be a  reduced global minimizer in $\R^N$ and for all $r>0$ let $(u_r,K_r)$ be the new global minimizer defined by 
$$u_r(x):=\frac{1}{\sqrt{r}}u(rx) \text{ and } K_r:=\frac{1}{r}K.$$
Then, there exists two sequences  $r_k\to 0$ and $R_k\to +\infty$ such that $(u_{r_k},K_{r_k})$ converges to some $(u_0,K_0)$ and $(u_{R_k},K_{R_k})$ converges to some $(u_\infty,K_\infty)$ in the sense of Definition \ref{convergence}. Moreover $(u_0,K_0)$ and $(u_\infty,K_\infty)$ are reduced global minimizers in $\R^N$.
\end{thm}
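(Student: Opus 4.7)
The plan is to extract subsequences using standard compactness arguments and then identify the limits as reduced global minimizers via a careful competitor comparison, essentially assembling results from David's book \cite{d}. First observe that the Mumford-Shah energy transforms homogeneously under the rescaling $u_r(x) = r^{-1/2}u(rx)$, $K_r = r^{-1}K$:
\[
\int_{B_M \setminus K_r} |\nabla u_r|^2\, dx + \Hh^{N-1}(K_r \cap B_M) = r^{1-N}\!\left(\int_{B_{Mr} \setminus K} |\nabla u|^2\, dx + \Hh^{N-1}(K \cap B_{Mr})\right),
\]
so each rescaled pair $(u_r,K_r)$ is itself a reduced global minimizer. The standard Mumford-Shah energy estimate $\int_{B_\rho \setminus K} |\nabla u|^2 dx + \Hh^{N-1}(K \cap B_\rho) \leq C \rho^{N-1}$ (obtained by comparing $(u,K)$ with a competitor in which $u$ is truncated to a constant and $K$ is augmented by a shell near $\partial B_\rho$) then translates into the uniform bound $\int_{B_M} |\nabla u_r|^2 dx + \Hh^{N-1}(K_r \cap B_M) \leq C M^{N-1}$, independent of $r$.

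With these uniform bounds in hand, I extract convergent subsequences by a diagonal argument. For each fixed integer $M$, the sets $K_r \cap \overline{B}_M$ form a relatively compact family in the Hausdorff metric by Blaschke's selection theorem, and $\nabla u_r$ is bounded in $L^2(B_M)$. Diagonalising over $M = 1,2,\ldots$ produces a sequence $r_k \to 0$ for which $K_{r_k} \cap \overline{B}_M \to K_0 \cap \overline{B}_M$ for every $M$ (with $K_0$ closed in $\R^N$) and such that $\nabla u_{r_k}$ converges weakly in $L^2_{loc}(\R^N)$. On each connected component $U$ of $\R^N \setminus K_0$, one exhausts $U$ by smooth subdomains $V \Subset U$; for $k$ large enough $V \subset \R^N \setminus K_{r_k}$, and a Poincar\'e inequality on $V$ together with the $L^2$ bound on $\nabla u_{r_k}$ provides constants $a_k$ such that $u_{r_k} - a_k$ is bounded in $W^{1,2}(V)$, hence converges (along a further subsequence) strongly in $L^1(V)$ to a limit function $u_0 \in W^{1,2}_{loc}(U)$. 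This delivers items (1) and (2) of Definition \ref{convergence}, and the same extraction scheme along radii $R_k \to \infty$ produces $(u_\infty, K_\infty)$.

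The main obstacle is upgrading the weak $L^2$ convergence of the gradients to the strong convergence demanded by item (3), while simultaneously showing that $(u_0, K_0)$ is a global minimizer. Given an arbitrary competitor $(v,L)$ for $(u_0,K_0)$ in a ball $B$, the strategy is to construct \emph{transfer competitors} $(v_k, L_k)$ for $(u_{r_k}, K_{r_k})$ in a slightly larger concentric ball $B'$, preserving the correct topological separation, and whose Mumford-Shah energies on $B'$ converge to that of $(v,L)$. Applying the minimality of $(u_{r_k}, K_{r_k})$ and passing to the limit, using the lower semicontinuity of $\Hh^{N-1}$ along Hausdorff-converging sequences (a Golab-type theorem), yields the minimizer inequality for $(u_0, K_0)$; as a by-product one gets convergence of the gradient $L^2$ norms on $B$, which combined with the weak convergence upgrades to strong $L^2$ convergence. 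This construction of transfer competitors is technical, and is precisely what is carried out in detail in Chapters 34--38 of \cite{d}. Finally, the fact that the limit pair $(u_0, K_0)$ is itself \emph{reduced} is the content of \cite[Proposition 40.9]{d} already cited in the introduction, and the same argument applied along $R_k \to \infty$ gives the analogous conclusion for $(u_\infty, K_\infty)$.
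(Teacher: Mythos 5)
Your proof mirrors the paper's own argument: uniform energy bounds from scaling, diagonal extraction for Hausdorff and weak-$L^2$ convergence, competitor transfer to establish minimality of the limit and upgrade gradient convergence to strong $L^2$, and reducedness via \cite[Proposition 40.9]{d} -- all outsourced, as in the paper, to the chapters of \cite{d} devoted to limits of minimizers (the paper cites Propositions 37.8 and 37.18, Corollary 38.48, and Theorem 40.9). One minor caution: in $\R^N$ with $N>2$, $\mathcal{H}^{N-1}$ is \emph{not} lower semicontinuous under Hausdorff convergence of arbitrary closed sets, so invoking ``a Golab-type theorem'' is imprecise; the correct mechanism, which David's construction actually uses, is the uniform concentration property enjoyed by Ahlfors-regular singular sets of Mumford--Shah minimizers.
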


\begin{proof} The convergence of $K_r$ and $u_r$, together with a weak convergence for $\nabla u_r$ in $L^2(B_M)$ are  consequences of standard compactness results, as explained for instance in  \cite[Proposition D.37.8.]{d}. The starting point is the fact that $u_r$ have a uniform Dirichlet energy in $B_M$, due to the classical energy estimate for Mumford-Shah minimizers $\int_{B_r}|\nabla u|^2dx \leq Cr^{N-1}$, obtained by taking $(u{1}_{\R^N\setminus B_r}, (K\setminus B_r)\cup \partial B_r)$ as competitor.  This ``weak'' convergence is enough to obtain that $(u_0,K_0)$ and $(u_\infty,K_\infty)$ are again reduced global minimizers. This fact is quite not obvious but a proof is, for instance, given in   \cite[Theorem D.40.9]{d}. In addition, by \cite[Proposition D.37.18.]{d} (which is just semicontinuity with respect to the weak convergence), we get
$$\int_{B_M\setminus K_0} |\nabla u_0|^2 dx \leq \liminf_{k } \int_{B_M\setminus K_{r_k}} |\nabla u_{r_k}|^2 dx, \text{ and }$$
$$\int_{B_M\setminus K_\infty} |\nabla u_0|^2 dx \leq \liminf_{k } \int_{B_M\setminus K_{R_k}} |\nabla u_{R_k}|^2 dx.$$
Finally, the reverse inequality in the above with a limsup is again not obvious, but follows from \cite[Corollary D.38.48.]{d}, which together with the weak convergence implies strong convergence in $L^2(B_M)$ for the gradients. 
\end{proof}

\begin{rem} Actually, since $u_r$ is a sequence of harmonic functions, the convergence in compact sets of $\R^N \setminus K_0$ is even better: it is a uniform convergence \cite[Proposition D.37.25.]{d}, but we will not need it.
\end{rem}


\section{Geometric Properties of Neumann cones}
\label{neumanncones}

The purpose of this section is to give a rigorous formulation and justification of the fact that the normal vector  to a Neumann cone $K$ at some point $x\in K$, is  always orthogonal to the vector $x$ itself, which is one of the key ingredient to prove the monotonicity Lemma of Section \ref{monotonicity}.

If $E\subset \R^N$ and $1\leq k \leq N$, we say that $E$ is $k$-rectifiable if there exists at most countably many Lipschitz mappings $f_i:\R^k \to \R^N$ such that 
$$\mathcal{H}^{k}(E\setminus   \cup_i  f_i(\R^{k}))=0.$$

 We denote the Grassmannian by $G(N,k)$, i.e. the set of all vectorial subspaces of $\R^N$ of dimension $k$.

 Let $1\leq k\leq N$ and $K\subset \R^N$ be a set with locally finite $\mathcal{H}^{k}$ measure. We say that a plane $P \in G(N,k)$ is an approximate tangent plane of $K$ at point $x$  if
$$\int_{\frac{1}{r}(K-x)} \varphi(y) d\mathcal{H}^{k}(y)\underset{r\to 0}\longrightarrow \int_{P} \varphi(y) d\mathcal{H}^{k}(y) \quad \quad \forall \varphi \in C^{0}_c(\R^N).$$
We shall denote this plane by $T_x(K)$. If $K\subset \R^N$ is $k$-rectifiable  then it well known that it admits an approximate tangent plane at $\mathcal{H}^{k}$ a.e. point \cite[Theorem 2.83 (i)]{afp}.

The following proposition is an easy exercise, but we provide a detailed proof since it appears as a crucial fact regarding to our main result.

\begin{prop} \label{curtail}Assume $N\geq 3$. Let  $E\subset \mathbb{S}^{N-1}$ be $(N-2)$-rectifiable and  $C\subset \R^N$ be the cone generated by $E$, i.e. 
$$C:=\{t w \in \R^N\; ;\; t\geq 0 \text{ and } w \in E\}.$$
Then $C$ is $(N-1)$-rectifiable and for $\mathcal{H}^{N-1}$-a.e. $x \in C$, if  $T_x(C)\in G(N,N-1)$ is the approximate tangent plane to $C$ at point $x$ and $\nu_x$ is a  normal vector to $T_x(C)$, then  $x\cdot \nu_x =0$.
\end{prop}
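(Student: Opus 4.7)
The plan is to parametrize $C$ via the smooth cone map $F(w,t):=tw$ and transfer the approximate tangent structure of $E$ to that of $C$; once the tangent plane formula is in hand, the orthogonality $x\cdot\nu_x=0$ will be a trivial algebraic consequence of $T_{w_0}(E)\subset w_0^\perp$.

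\textbf{Rectifiability of $C$.} Since $E$ is $(N-2)$-rectifiable, I fix countably many Lipschitz maps $f_i:\R^{N-2}\to\R^N$ covering $\mathcal{H}^{N-2}$-almost all of $E$. Composing each $f_i$ with the $1$-Lipschitz nearest-point retraction of $\R^N$ onto $\ol{B(0,1)}$ (the identity on $\mathbb{S}^{N-1}$) I may assume $f_i$ takes values in $\ol{B(0,1)}$. For each integer $k\geq 0$ the map $F_{i,k}(y,s):=(k+s)\,f_i(y)$, defined on $\R^{N-2}\times[0,1]$, is Lipschitz and extends to a Lipschitz map $\R^{N-1}\to\R^N$. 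The countable collection $(F_{i,k})_{i,k}$ covers $\mathcal{H}^{N-1}$-a.e.\ of $C$: any $x=tw$ with $t>0$ and $w=f_i(y)$ lies in $F_{i,k}(\R^{N-1})$ with $k=\lfloor t\rfloor$, and the cone over $E\setminus\bigcup_i f_i(\R^{N-2})$ has $\mathcal{H}^{N-1}$-measure zero for the same reason (Lipschitz images of $\mathcal{H}^{N-1}$-null sets are null). Hence $C$ is $(N-1)$-rectifiable.

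\textbf{Approximate tangent plane.} The restriction $F:\mathbb{S}^{N-1}\times(0,\infty)\to\R^N\setminus\{0\}$ is a $C^\infty$ diffeomorphism with differential $DF(w,t)(v,\sigma)=tv+\sigma w$ for $(v,\sigma)\in w^\perp\times\R$. Let $w_0\in E$ be a point at which $E$ admits an approximate tangent $(N-2)$-plane $T_{w_0}(E)$, so in particular $T_{w_0}(E)\subset T_{w_0}(\mathbb{S}^{N-1})=w_0^\perp$; this is valid for $\mathcal{H}^{N-2}$-a.e.\ $w_0\in E$ by \cite[Theorem 2.83]{afp}. Fix $t_0>0$ and set $x_0:=t_0 w_0$. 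Working in a local chart of $\mathbb{S}^{N-1}$ centred at $w_0$, the product set $E\times(0,\infty)$ is $(N-1)$-rectifiable with approximate tangent $T_{w_0}(E)\times\R$ at $(w_0,t_0)$, and the $C^1$-diffeomorphism invariance of the approximate tangent plane (via blow-up and the chain rule) yields
\begin{equation*}
T_{x_0}(C)\;=\;DF(w_0,t_0)\bigl(T_{w_0}(E)\times\R\bigr)\;=\;t_0\,T_{w_0}(E)+\R w_0\;=\;T_{w_0}(E)+\R w_0.
\end{equation*}

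\textbf{Orthogonality and conclusion.} Since $T_{w_0}(E)\subset w_0^\perp$ and $w_0\notin w_0^\perp$, the sum $T_{w_0}(E)+\R w_0$ is direct and $(N-1)$-dimensional; its unit normal $\nu_{x_0}$ lies in $T_{w_0}(E)^\perp\cap w_0^\perp$, so $\nu_{x_0}\cdot x_0=t_0\,\nu_{x_0}\cdot w_0=0$, as required. The set of exceptional points in $C$ is the union of $\{0\}$ and the cone over an $\mathcal{H}^{N-2}$-null subset of $E$, both of $\mathcal{H}^{N-1}$-measure zero by the first step. I expect the most delicate point to be not the rectifiability or the final algebraic orthogonality, but the $C^1$-diffeomorphism transfer of the approximate tangent: this can either be verified directly by rescaling (write $t=t_0+rs$, $w=w_0+r\xi+o(r)$ and pass to the limit $r\to 0^+$ in the definition of approximate tangent) or deduced from the area formula applied in a local chart.
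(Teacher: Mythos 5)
Your argument is correct in substance but takes a genuinely different route from the paper's, and is a legitimate alternative. For rectifiability, you build explicit Lipschitz covers $F_{i,k}(y,s)=(k+s)f_i(y)$ of $C$ from the Lipschitz covers $f_i$ of $E$, whereas the paper observes that away from the origin $C$ is locally bi-Lipschitz equivalent to $E\times\R^+$ and invokes the rectifiability-of-products result \cite[Theorem 3.2.23]{federer}. Your construction is more self-contained, but note one imprecision: to see that the cone over the $\mathcal{H}^{N-2}$-null residual $S:=E\setminus\bigcup_i f_i(\R^{N-2})$ is $\mathcal{H}^{N-1}$-null, the phrase ``Lipschitz images of $\mathcal{H}^{N-1}$-null sets are null'' is not by itself the right justification, since you first need to know that $S\times[\e,R]$ is $\mathcal{H}^{N-1}$-null before pushing it forward by the polar map; the missing ingredient is a product-measure estimate of the form $\mathcal{H}^{m+n}(A\times B)\leq C\,\mathcal{H}^m(A)\,\diam(B)^n$ for bounded $B$, which is precisely the content of \cite[Theorem 2.10.45]{federer} that the paper cites for this purpose. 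For the tangent plane, you appeal to the $C^1$-diffeomorphism invariance of approximate tangent planes under $F(w,t)=tw$ (and rightly flag this as the delicate step, offering a rescaling sketch), while the paper instead proves $T_{x_0}(C)=T_{w_0}(E)\times\mathrm{vect}\{x_0\}$ directly by applying the coarea formula to the blown-up sets and passing to the weak limit of the associated Hausdorff measures; your shortcut is cleaner once the invariance is granted, but the paper's computation is what you would have to unpack to justify it from first principles. The final orthogonality $\nu_{x_0}\in T_{w_0}(E)^\perp\cap w_0^\perp\Rightarrow x_0\cdot\nu_{x_0}=0$ is identical to the paper's and correct.
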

\begin{proof}  Since the cone $\mathbb{S}^{N-1}\times \R^+$ is, away from the origin, locally bi-Lipschitz equivalent to an orthogonal cartesian product of type $\R^{N-1}\times \R$, it follows from \cite[Theorem 3.2.23]{federer} that $C$ is $(N-1)$-rectifiable in $\R^N$. By consequence $C$ admits an approximative tangent plane $T_x(C)\in G(N,N-1)$ at $\mathcal{H}^{N-1}$-a.e. point.

Let $E:=C\cap \mathbb{S}^{N-1}$. By assumption, $E$ is a $(N-2)$-rectifiable set. Let $S\subset E$ be the $\mathcal{H}^{N-2}$-negligible set such that $E\setminus S$ admits a $(N-2)$-dimensional approximate tangent plane   $T_w(E) \in G(N,N-2)$ for every $x\in E\setminus S$.

In virtue of \cite[Theorem 2.10.45]{federer}, the cone generated by $S$, namely $\bigcap_{t\geq 0} tS$ is $\mathcal{H}^{N-1}$-negligible. We deduce that for $\mathcal{H}^{N-1}$-a.e. $x\in C$, we can assume that both $T_x(C)$ exists and moreover $T_{x/|x|}(E)\in G(N,N-2)$  also  does exist. But then it is easy to check that $T_x(C)=T_{x/|x|}(E) \times vect\{x\}$.

Indeed, let  $x_0\in C$ be given and let us seek for the approximate tangent plane at point $x_0$. Since $C$ is invariant under dilatation, we can assume without loss of generality that $|x_0|=1$. For $r>0$ we set $C_r:=\frac{1}{r}(C-x_0)$. Then for every $\varphi \in C^0_c(\R^N)$, the coarea formula \cite[(2.72)]{afp} applied on the rectifiable set $C_r$ with the $r$-Lipschitz function $f:x\mapsto r|x+x_0|$,  for which under the notation of \cite{afp} the coarea factor is equal to ${\bf C}_1 d^{P} f_x = r$ for all $P \in G(N,N-1)$, yields
\begin{eqnarray}
\int_{C_r} \varphi(y) d\mathcal{H}^{N-1}(y)&=&\frac{1}{r} \int_{\R^+} \left(\int_{C_r\cap \partial B(-x_0,t/r)} \varphi(w) d\mathcal{H}^{N-2}(w)  \right) dt\notag \\
&=& \frac{1}{r}\int_{\R^+} \left(\int_{\frac{1}{r}\big(C\cap \partial B_t -x_0\big)} \varphi(w) d\mathcal{H}^{N-2}(w)  \right) dt \label{firmeou}
\end{eqnarray}
But since $C$ is conical, we have that $C\cap \partial B_t=tE$ for all $t$ and from the fact that $\mathcal{H}^{N-2}(tE)=t^{N-2}\mathcal{H}^{N-2}(E)$  we deduce that 
$$\int_{\frac{1}{r}\big(tE -x_0\big)} \frac{\varphi(w)}{r} d\mathcal{H}^{N-2}(w) =\int_{\frac{1}{r}\big(E -x_0\big)}  \frac{1}{r}\varphi\big(tz+x_0(t-1)/r\big) t^{N-2} d\mathcal{H}^{N-2}(z).$$
Let us denote by $\mu_{r}$ the $\sigma$-finite measure on $\R^N$ defined by $\mu_r:=\mathcal{H}^{N-2}|_{\frac{1}{r}(E-x_0)}$, that we know to converge weakly to $\mu_0:=\mathcal{H}^{N-2}|_{T_{x_0}(E)}$. Returning to \eqref{firmeou} and applying Fubini Theorem we arrive to 
\begin{eqnarray}
\int_{C_r} \varphi(y) d\mathcal{H}^{N-1}(y)= \int_{\R^N} \left( \int_{\R^+}  \frac{1}{r} \varphi\big(tz+x_0(t-1)/r\big) t^{N-1}  dt  \right) d\mu_r(z) \notag
\end{eqnarray}
or, using the change of variable $u=(t-1)/r$ in the integral on the $t$ variable,  we can also write it as
\begin{eqnarray}
\int_{C_r} \varphi(y) d\mathcal{H}^{N-1}(y)&=& \int_{\R^N} \left( \int_{-\frac{1}{r}}^{+\infty}  \varphi\big((ru+1)z+x_0 u\big) (ru+1)^{N-1} du  \right) d\mu_r(z) \notag \\
&=&  \int_{\R^N} \left( \int_{-\infty}^{+\infty}   \varphi\big((ru+1)z+x_0 u\big)(ru+1)^{N-1} du  \right) d\mu_r(z), \label{Fubinibini}
\end{eqnarray}
the latter being true for $r$ small enough, depending on the size of the support of $\varphi$, i.e. the integration being in reality computed on a compact domain only. Now we notice that, for $0\leq r\leq 1$ the family of continuous functions 
$$g_r : z\mapsto  \int_{-\infty}^{+\infty}   \varphi\big((ru+1)z+x_0 u\big) (ru+1)^{N-1} du  $$
are all supported on a same compact set, and   converges uniformly as $r\to 0$ to the function 
$$g_0(z):= \int_{\R}   \varphi\big(z+x_0 u\big) du.$$
Therefore, passing to the limit in \eqref{Fubinibini} we obtain that 
$$\int_{C_r} \varphi(y) d\mathcal{H}^{N-1}(y)\underset{r\to 0 }\longrightarrow \int_{\R^N} \left( \int_{\R}   \varphi\big(z+x_0 u)   du  \right) d\mu_0(y) = \int_{T_{x_0}(E)\times \R x_0}\varphi d\mathcal{H}^{N-1}$$
and this is exactly saying that $T_{x_0}(C)=T_{x_0}(E) \times vect\{x_0\}$, as desired.
 \end{proof}
 
 \begin{rem} A direct consequence of Proposition \ref{curtail} is that Neumann cones are $(N-1)$-rectifiable.
 \end{rem}

\begin{cor} \label{maincor}Let $N\geq 2$, $C\subset \R^N$ be a Neumann cone and $K\subset C$ a closed set. Then $K$ is $(N-1)$-rectifiable and  for $\mathcal{H}^{N-1}$a.e. $x \in K$ we have that $x \cdot \nu_x =0$ for any vector $\nu_x$ orthogonal to $T_x(C)$.
\end{cor}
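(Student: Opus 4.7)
The corollary is essentially a transfer of Proposition \ref{curtail} from the Neumann cone $C$ itself to an arbitrary closed subset $K\subset C$, so I would argue separately in the two dimensional regimes of Definition \ref{definNcone}.

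In the easy case $N=2$, condition (2)' tells us that $C$ is a finite union of closed rays emanating from $0$. Any closed $K\subseteq C$ is then automatically $1$-rectifiable (as a closed subset of a finite union of line segments/rays), and at every point $x\in K\setminus\{0\}$ the approximate tangent line $T_x(C)$ is precisely the line supporting the ray through $x$. Since $x$ itself is collinear with this direction, any normal vector $\nu_x$ satisfies $x\cdot\nu_x=0$. The origin contributes an $\mathcal{H}^1$-negligible singleton, so nothing more is needed there.

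For $N\geq 3$, I would apply Proposition \ref{curtail} directly to $C$: using hypothesis (2) of Definition \ref{definNcone} that $C\cap \mathbb{S}^{N-1}$ is $(N-2)$-rectifiable, the proposition yields that $C$ is $(N-1)$-rectifiable, and produces an $\mathcal{H}^{N-1}$-negligible set $E\subset C$ outside of which the approximate tangent plane $T_x(C)\in G(N,N-1)$ exists and $x\cdot \nu_x=0$ for any $\nu_x$ normal to $T_x(C)$. The passage to $K$ is then pure bookkeeping: if $C$ is covered up to a $\mathcal{H}^{N-1}$-null set by countably many Lipschitz images of $\R^{N-1}$, then so is its subset $K$, hence $K$ is $(N-1)$-rectifiable; and by monotonicity of Hausdorff measure $\mathcal{H}^{N-1}(K\cap E)\leq \mathcal{H}^{N-1}(E)=0$, so at $\mathcal{H}^{N-1}$-a.e. $x\in K$ the point lies in $C\setminus E$ where the orthogonality already proved for $C$ applies verbatim.

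There is really no obstacle here once Proposition \ref{curtail} is in hand; the content of the corollary is just to emphasize that no regularity on $K$ beyond closedness is needed, since the tangential constraint $x\cdot \nu_x=0$ is inherited from the ambient Neumann cone $C$ rather than computed intrinsically from $K$. The only mild subtlety worth stating explicitly is that the normal $\nu_x$ appearing in the conclusion is the normal to $T_x(C)$ (well defined up to sign because $T_x(C)$ has codimension one), not a normal to any would-be tangent plane of $K$.
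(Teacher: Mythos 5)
Your proof is correct, and it takes essentially the same route as the paper: for $N\geq 3$ you apply Proposition~\ref{curtail} to the ambient cone $C$, observe that $K\subset C$ inherits $(N-1)$-rectifiability, and note that the $\mathcal{H}^{N-1}$-null exceptional set in $C$ remains null when restricted to $K$; you handle $N=2$ directly, which the paper dismisses as trivial. One point worth flagging: the paper's one-line proof additionally records that $T_x(K)\subseteq T_x(C)$ and, both being hyperplanes, $T_x(K)=T_x(C)$ for a.e.\ $x\in K$. You explicitly deny the need for this (``not a normal to any would-be tangent plane of $K$''), and indeed the corollary as literally stated only mentions $T_x(C)$; but the identity $T_x(K)=T_x(C)$ is exactly what is invoked when the corollary is used in the proof of Lemma~\ref{IPP}, where the relevant normal $\nu_F$ is defined via the tangent plane of $S_F\subset K$, not of $C$. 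So the paper's extra sentence is not idle — it closes the gap between the phrasing of the corollary and its application — and your remark, while harmless for the statement in isolation, slightly understates what the corollary is really meant to deliver.
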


\begin{proof} The statement is trivial for $N=2$ thus we assume that $N\geq 3$. From the fact that $K\subset C$ we directly get that $K$ is $(N-1)$-rectifiable (because $C$ is), and moreover $T_x(K)$ must be contained in $T_x(C)$. But since they have same dimension we must have $T_x(K)=T_x(C)$. The conclusion then follows from Proposition \ref{curtail}.
\end{proof}

\section{Integration by parts for  $SBV$ fields}
\label{IPPSBV}

Let us recall some standard definitions and properties of the functional spaces $BV$ and $SBV$ that will be used later, and that one can find for instance in  \cite{afp}.

\subsection{Short review on SBV theory}
For any open set $\Omega \subset \R^N$, the space $BV(\Omega)$ is the class of all functions $u \in L^1_{loc}(\Omega,\R)$ such that $Du$ (the derivative of $u$ in the distributional sense) is a finite measure. If $u \in (BV(\Omega))^N$ is a $BV$ vector field, a point $z \in \R^N$  is an approximate limit for $u$ at point $x$ if
$$\lim_{\rho \to 0} \frac{1}{|B(x,\rho)|}\int_{B(x,\rho)}|u(y)-z|dy=0.$$
The set $S_u$ of points where this property does not hold is called the \emph{approximate discontinuity set} of $u$, and the points $z$ for which the limit exists is called an approximative limit of $u$ at point $x$ and is denoted by $\tilde u(x)$. A remarkable result of  Federer and Vol'pert (see \cite[Th.
3.78.]{afp}) says that when $u \in (BV(\Omega))^N$, then $S_u$ is
$(N-1)$-rectifiable  and $D^su$ (the singular part of
$Du$ with respect to $\mathscr{L}^N$ in the Radon-Nikodym
decomposition $Du=D^au+D^su$) restricted to $S_u$ is absolutely
continuous with respect to $\Hh^{N-1}$. We will say that $u \in
(SBV(\Omega))^N$ when $D^s u$ is actually \emph{concentrated} on $S_u$.

The density of the regular part $D^au$ of $Du$ with respect to
$\mathscr{L}^N$, denoted by $\nabla u$, coincides
$\mathscr{L}^N$-a.e. with the \emph{approximate differential} of $u$
(see \cite[Th. 3.83.]{afp}). A function $u$ is approximately
differentiable at $x$ if there exists a  matrix $\nabla u(x)$ such
that
$$\lim_{\rho \to 0}\frac{1}{|B(x,\rho)|}\int_{B(x,\rho)}\frac{|u(y)-\tilde u(x)-\nabla u (x) . (y-x)|}{\rho}dy =0.$$

In the sequel we will also use the notion of \emph{trace} of $u$ on the singular set $S_u$.
Since $S_u$ is rectifiable, one can fix an orientation $\nu_u: S_u \to {\mathbb{S}^{N-1}}$ in such a way that for
$\Hh^{N-1}$-a.e. $x \in S_u$ the approximate tangent plane to $S_u$ at $x$ is orthogonal to the vector $\nu_u(x)$.
Then for any $x \in S_u$ and $\rho >0$ we define $B(x,\rho)^+:=B(x,\rho)\cap \{y ; \langle y,\nu_u(x) \rangle \geq 0\}$
and    $B(x,\rho)^-:=B(x,\rho)\cap \{y ; \langle y, \nu_u(x) \rangle \leq 0\}$. For $\Hh^{N-1}$-a.e. $x \in S_u$,
Theorem 3.77. of \cite{afp} provides the existence of traces $u^+(x)$ and $u^-(x)$ satisfying
\begin{eqnarray}
\lim_{\rho \to 0} \frac{1}{|B(x,\rho)^\pm|}\int_{B(x,\rho)^\pm}|u(y)-u^\pm(x)|dy=0. \label{traceSBV}
\end{eqnarray}
The set of points $x \in S_u$ where $u^\pm(x)$ exist is called the
\emph{jump set} and is denoted by $J_u$.  It can be shown
that $\Hh^{N-1}(S_u\setminus J_u)=0$ and for $x \in J_u$ the
quantity $(u^+(x)-u^-(x))$ is called the jump of $u$ at point $x$,
which sign depends on the orientation of $S_u$. Moreover for any $u
\in (SBV(\Omega))^N$ the representation
\begin{eqnarray}
Du=D^au+D^su = \nabla u \,\mathscr{L}^N  + (u^+-u^-)\otimes \nu_u \,\Hh^{N-1}|S_u, \label{Suu}
\end{eqnarray}
holds. 

Let us also mention that a simple approximation argument \cite[Proposition 4.4.]{afp} says that, if $K\subset \Omega$   is closed and $\mathcal{H}^{N-1}(K)<+\infty$, then any $u \in L^\infty(\Omega)\cap W^{1,1}(\Omega\setminus K)$ belongs to $SBV(\Omega)$ and $\mathcal{H}^{N-1}(S_u\setminus K)=0$. 

\subsection{Sets of finite perimeter} If $U \subset \R^N$ is open  we say that $U$ has finite perimeter if ${\bf 1}_U \in BV(\R^N)$. The reduced boundary  $\partial^*U$ is the set of points $x \in \R^N$ such that the limit $\lim_{r\to 0}D({\bf 1}_U)(B_r(x))/|B_r(x)|=:-\nu_{U}(x)$ exists and satisfies $|\nu_U(x)|=1$ (here we chosed the normal to be pointing outwise). In particular when $\mathcal{H}^{N-1}(\partial \Omega)<+\infty$, then ${\bf 1}_U \in SBV(\R^N)$ and  $\partial^*U$ is  equal to the jump set of ${\bf 1}_\Omega$. By consequence the following Gauss-Green formula holds
$$\int_{U}\div\, \varphi \, dx =  \int_{\partial^*U} \nabla \varphi \cdot \nu_U \, d\mathcal{H}^{N-1}(x) \quad \quad \forall \varphi \in (C^{1}_c(\R^N))^N.$$

\subsection{A Gauss-Green Formula for SBV fields}

\begin{lem} \label{lemIPP} Let $\Omega\subset \R^N$ be open  and  $F\in L^\infty\cap(SBV(\Omega))^N$. Let $\div \, F^a$ be the trace of the absolutely continuous part of $DF$ with respect to the Lebesgue measure and $S_F$ be the singular set of $F$.   Then for any open set  $U$ such that 
\begin{enumerate}
\item $\overline{U}\subset \Omega$
\item $\mathcal{H}^{N-1}(\partial U)<+\infty$   
\item $\mathcal{H}^{N-1}(S_F\cap \partial^* U)=0$
\end{enumerate}
 we have that
\begin{eqnarray}
 \int_U \div \,F^a \; dx = \int_{\partial^*U} \tilde{F} \cdot \nu_U \; d\mathcal{H}^{N-1} + \int_{S_F\cap U}  (F^- - F^+) \cdot \nu_F\; d\mathcal{H}^{N-1}, \label{IPPSBVF}
 \end{eqnarray}
where $\tilde{F}$ denotes the approximative limit of $F$, which exists $\mathcal{H}^{N-1}$-a.e. on the rectifiable set $\partial^*U$.
\end{lem}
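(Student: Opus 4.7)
The plan is to test the distributional identity
$$D(\operatorname{div} F)=\operatorname{div} F^a\,\mathscr{L}^N+(F^+-F^-)\cdot\nu_F\,\mathcal{H}^{N-1}\lfloor S_F,$$
obtained componentwise from \eqref{Suu} and by tracing, against a smooth approximation of $\mathbf{1}_U$. Let $\rho_\varepsilon$ be a standard radially symmetric mollifier and set $\chi_\varepsilon:=\rho_\varepsilon*\mathbf{1}_U$. Since $\overline U\subset\Omega$, we have $\chi_\varepsilon\in C_c^\infty(\Omega)$ for $\varepsilon$ small enough, so the $SBV$ characterization of the distributional divergence gives
$$-\int_\Omega F\cdot\nabla\chi_\varepsilon\,dx=\int_\Omega \chi_\varepsilon\,\operatorname{div} F^a\,dx+\int_{S_F}\chi_\varepsilon\,(F^+-F^-)\cdot\nu_F\,d\mathcal{H}^{N-1}.$$

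Using $D\mathbf{1}_U=-\nu_U\mathcal{H}^{N-1}\lfloor\partial^*U$ (which applies because assumption (2) makes $U$ a set of finite perimeter) together with Fubini, the left-hand side rewrites as
$$-\int_\Omega F\cdot\nabla\chi_\varepsilon\,dx=\int_{\partial^*U}\nu_U(y)\cdot(\rho_\varepsilon*F)(y)\,d\mathcal{H}^{N-1}(y).$$
By assumption (3), $\mathcal{H}^{N-1}$-a.e.\ $y\in\partial^*U$ is outside $S_F$ and so admits an approximate limit $\tilde F(y)$; a standard approximate-continuity argument shows that $(\rho_\varepsilon*F)(y)\to\tilde F(y)$ at every such $y$. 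Since $\|\rho_\varepsilon*F\|_\infty\le\|F\|_\infty$ and $\mathcal{H}^{N-1}(\partial^*U)\le\mathcal{H}^{N-1}(\partial U)<+\infty$, dominated convergence yields
$$-\int_\Omega F\cdot\nabla\chi_\varepsilon\,dx\xrightarrow[\varepsilon\to 0]{}\int_{\partial^*U}\tilde F\cdot\nu_U\,d\mathcal{H}^{N-1}.$$

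On the right-hand side, both terms are handled by dominated convergence, using the decomposition $\R^N=U^{(1)}\cup U^{(0)}\cup\partial^*U\cup N$ (with $\mathcal{H}^{N-1}(N)=0$, by Federer's theorem applied to the finite-perimeter set $U$) on which $\chi_\varepsilon$ converges pointwise to $1$, $0$, $1/2$ respectively. For the volume term this gives $\int_\Omega \chi_\varepsilon\,\operatorname{div} F^a\,dx\to\int_U\operatorname{div} F^a\,dx$, since $U^{(1)}=U$ up to $\mathscr{L}^N$-null sets and $\operatorname{div} F^a\in L^1$ on a neighborhood of the compact $\overline U$. For the jump term, assumption (3) eliminates the problematic set $\partial^*U$, so $\chi_\varepsilon\to\mathbf{1}_U$ pointwise $\mathcal{H}^{N-1}$-a.e.\ on $S_F$, and the finiteness of $\int_{S_F\cap B_R}|(F^+-F^-)\cdot\nu_F|\,d\mathcal{H}^{N-1}$ (for any ball $B_R\supset\overline U$, coming from $F\in SBV(\Omega)^N$ and $\|F\|_\infty<\infty$) makes dominated convergence applicable, producing $\int_{S_F\cap U}(F^+-F^-)\cdot\nu_F\,d\mathcal{H}^{N-1}$.

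Equating the three limits and rearranging yields formula \eqref{IPPSBVF}. The one delicate point is precisely the convergence on $S_F$: one must ensure that no spurious contribution comes from the reduced boundary $\partial^*U$, which is exactly what assumption (3) forbids, and that the remaining density-$0$/density-$1$ splitting covers $\mathcal{H}^{N-1}$-almost all of $S_F\setminus\partial^*U$, which Federer's theorem guarantees under (2).
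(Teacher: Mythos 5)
Your proof is correct and takes a genuinely different route from the paper's. The paper forms the product $G={\bf 1}_U F$, invokes the fact that $L^\infty\cap SBV(\Omega)$ is an algebra (a consequence of the $BV$ chain rule), reads off the three pieces of the divergence measure $\div\,G$ from the chain-rule description of $G^\pm$ on $J_G\subseteq S_F\cup\partial^*U$, and then tests $\div\,G$ against a test function equal to $1$ on $\overline U$, for which the pairing vanishes. You instead avoid the product altogether: you mollify ${\bf 1}_U$, apply the distributional form of $\div\,F$ to the smooth cutoff $\chi_\varepsilon=\rho_\varepsilon*{\bf 1}_U$, rewrite the pairing $-\int F\cdot\nabla\chi_\varepsilon$ via $D{\bf 1}_U=-\nu_U\,\mathcal H^{N-1}\lfloor\partial^*U$ and Fubini, and pass to the limit using approximate continuity on $\partial^*U$, the Federer density decomposition $U^{(1)}\cup U^{(0)}\cup\partial^*U$ up to an $\mathcal H^{N-1}$-null set, and the fact that a radially symmetric mollifier of ${\bf 1}_U$ converges pointwise to $1,\,1/2,\,0$ on these three sets. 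Your route trades the $BV$ product/chain-rule machinery for the (equally standard) facts about mollification and densities of sets of finite perimeter; it is arguably more self-contained, while the paper's argument is shorter once the chain rule for ${\bf 1}_U F$ is taken for granted. One minor point, shared by the paper's own argument: both derivations really produce $S_F\cap U^{(1)}$ in the jump term rather than $S_F\cap U$, and identifying these requires $\mathcal H^{N-1}(S_F\cap(U^{(1)}\setminus U))=0$, which is not literally implied by hypothesis (3) but is trivially satisfied in the only case used in the paper ($U=B_r$).
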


\begin{proof} When $F \in (SBV(\Omega))^N$  we recall that 
\begin{eqnarray}
DF = \nabla F \,\mathscr{L}^N + (F^+ - F^-) \otimes \nu_F \,\mathcal{H}^{N-1}|_{J_F}. \label{Decompt}
\end{eqnarray}

By consequence $\div\,{F}$ is a measure and taking the trace in the above identity yields 
\begin{eqnarray}
\div \, F = \div \, F^a \,\mathscr{L}^N + (F^+ - F^-) \cdot \nu_F \,\mathcal{H}^{N-1}|_{J_F}. \label{Decompt1}
\end{eqnarray}
Now let  $U$ be such that $\overline{U}\subset \Omega$. It is well known that  $L^{\infty}\cap (SBV(\Omega))^N$ is an algebra, i.e. stable by products, as a consequence of the Chain-Rule in $BV$ (see \cite[Example 3.97.]{afp}). By consequence, $G={\bf 1}_{U} F \in (SBV(\Omega))^N$ and $S_G\subseteq S_F\cup \partial U$ (Remember that ${\bf 1}_U \in SBV$ because $\mathcal{H}^{N-1}(\partial U)<+\infty$). Since by assumption $\mathcal{H}^{N-1}(S_F\cap \partial U)=0$, we obtain that the decomposition of $\div \, G$ as a   measure is the following
\begin{eqnarray}
\quad \quad \div \, G= \div \, F^a \mathscr{L}^N|_{U} + (F^+ - F^-) \cdot \nu_F \,\mathcal{H}^{N-1}|_{J_F\cap U}- \tilde{F}\cdot \nu_U\,\mathcal{H}^{N-1}|_{\partial^*U}\label{identification}
\end{eqnarray}


(see again  \cite[Example 3.97.]{afp} for the characterization of $G^+$ and $G^-$ on the jump set in terms of $F^\pm$ and  ${\bf 1}_{U}^\pm$).

Now let us prove \eqref{IPPSBVF}. For this purpose we take a test function $\psi \in C^\infty_c(\Omega)$ such that $\psi=1$ on $\overline{U}$.  Then in the sense of distributions, $\langle \div \,G,\psi\rangle=\langle G, \div \,\psi\rangle= \int_{\Omega} G \,\div\,\psi\, dx=0$. But now returning to \eqref{identification} we get
\begin{eqnarray}
0&=&\langle \div \,G, \psi \rangle=\int_{\Omega}\psi \,d(\div \, G) \notag\\
&=&  \int_U \div \,F^a \; dx - \int_{\partial^*U} \tilde{F} \cdot \nu_U \; d\mathcal{H}^{N-1} + \int_{S_F\cap U}  (F^+ - F^-) \cdot \nu_F\; d\mathcal{H}^{N-1}. \notag
\end{eqnarray}
\end{proof}

\section{Spectrum of the Laplacian on a fractured sphere}

The purpose of this section is to study a Neumann eigenvalue problem on $\mathbb{S}^{N-1}\setminus K$ when $K$ is a Neumann cone, or more generally contained in a Neumann cone. 

We first assume that $K$ is a Neumann cone. We will call Neumann-Laplace-Beltrami operator on $\mathbb{S}^{N-1}\setminus K$, the operator associated to the quadratic form $Q(u)=\int_{\mathbb{S}^{N-1}\setminus K}|\nabla u|^2 \,d\mathcal{H}^{N-1}$ on the Hilbert space  $L^{2}(\mathbb{S}^{N-1})$,  with domain $W^{1,2}(\mathbb{S}^{N-1}\setminus K)$. Since by assumption the embedding $W^{1,2}(\mathbb{S}^{N-1}\setminus K)\to L^2(\mathbb{S}^{N-1})$ is compact, the quadratic form $Q(u)+\|u\|_2^2$ has a compact resolvent and it follows that the eigenvalues of the Neumann-Laplace-Beltrami operator  on $\mathbb{S}^{N-1}\setminus K$ is a countable and discrete set starting from $0$ and going to $+\infty$, denoted by $\lambda_k(\mathbb{S}^{N-1}\setminus K)$, $k \in \N$. Actually the eigenvalue problem can be studied separately in each connected component of $\mathbb{S}^{N-1}\setminus K$, in which the eigenvalue $\lambda_0(\mathbb{S}^{N-1}\setminus K)=0$ is simple. If $U$ is one connected component and $\gamma$ denotes the first positive eigenvalue in this domain, then the following Wirtinger inequality holds
\begin{eqnarray}
\quad \quad \int_{U} (u(x) -m)^2 d\mathcal{H}^{N-1}(x) \leq \frac{1}{\gamma}\int_{U} |\nabla_\tau u(x)|^2 d\mathcal{H}^{N-1}(x), \quad \forall u \in W^{1,2}(U),\label{wirtinger}
\end{eqnarray}
where $m$ denotes the average of $u$ on $U$ and $\nabla_\tau$ is the tangential gradient on the sphere.

Notice that if $C$ is a Neumann cone and if  $K$ is an arbitrary closed set such that $K\subset C$, then  the embedding $W^{1,2}(\mathbb{S}^{N-1}\setminus K)\to L^2(\mathbb{S}^{N-1})$ is still compact because $\|u\|_{W^{1,2}(\mathbb{S}^{N-1}\setminus C)}= \|u\|_{W^{1,2}(\mathbb{S}^{N-1}\setminus K)}$. By consequence all the above applies with $K$.

We will use several times the following simple fact.

\begin{rem}\label{monotonieEi} Let $E= \mathbb{S}^{2}\cap \{(x,y,z) \; ; \; z=0\}$ be the  equator, and let $F$ and $G$ be two subsets of $E$ verifying $F\subseteq G \subseteq E$. Then
$$\lambda_1(\mathbb{S}^2 \setminus G)\leq \lambda_1(\mathbb{S}^2 \setminus F).$$
This is a simple consequence of the Rayleigh formula, which says that 
$$\lambda_1(\mathbb{S}^2\setminus G)= \min_{v\in W^{1,2}(\mathbb{S}^2 \setminus G)}\frac{\int_{\mathbb{S}^2 \setminus G}|\nabla v|^2}{\int_{\mathbb{S}^2 \setminus G}v^2}.$$
The claim then follows from the fact that  $W^{1,2}(\mathbb{S}^2 \setminus F)\subseteq W^{1,2}(\mathbb{S}^2 \setminus G)$, thus the eigenfunction in $\mathbb{S}^2 \setminus F$ is a competitor in the minimum above, and its Rayleigh quotient in $\mathbb{S}^2 \setminus G$ is still equal to $\lambda_1(\mathbb{S}^2\setminus F)$ because  $G\setminus F$ has zero measure.
\end{rem}

\begin{rem} \label{monotonieEi2} Notice that the monotonicity described in Remark \ref{monotonieEi}  is the exact opposite of what happens in dimension 2 ! Indeed, if $I\subset \mathbb{S}^1$ is an arc of circle, then $\lambda_1(I)=(\pi/|I|)^2$, where $|I|$ is the length of the arc $I$. Therefore if $F\subset G$ are two arcs in $\mathbb{S}^1$, then $\lambda_1(\mathbb{S}^1 \setminus G)> \lambda_1(\mathbb{S}^1 \setminus F)$. What does not work in the above argument in dimension 2 is that the $L^2$ norm is not preserved in the respective two domains so that the Rayleigh quotient is not the same in the two domains, even if the inclusion of the spaces $W^{1,2}$ still holds.
\end{rem}

The eigenvalue in the special case of a half-plane is explicit. We sumarize some known facts in the following proposition.

\begin{prop} \label{valuehalf} Let 
$$S_\beta := \left\{(-\cos(\varphi),0,\sin(\varphi)) \in \mathbb{S}^{2};\; \, \varphi\in [-\beta,\beta]\right\}$$
be an arc of circle of aperture $2\beta$ on the unit sphere (vertical and left sided according to our parametrization). Then $\lambda_1(\mathbb{S}^{2}\setminus S_\frac{\pi}{2})=\frac{3}{4}$ is simple and the eigenspace  is generated by the restriction on $\mathbb{S}^{2}\setminus S_\frac{\pi}{2}$ of the following cracktip function written in cylindrical coordinates
\begin{eqnarray}
f(r,\theta,z)=\sqrt{r}\sin(\theta/2),\quad r>0 \text{ and }\theta\in(-\pi,\pi).\label{cracktipu}
\end{eqnarray}
In addition for $\beta$ close to $\frac{\pi}{2}$ we have the following expansion :
\begin{eqnarray}
\lambda_1(\mathbb{S}^{2}\setminus S_\beta)=\frac{3}{4}+\frac{2}{\pi}\cos(\beta)+o(\cos(\beta)). \label{developpement}
\end{eqnarray}
By consequence $\lambda_1(\mathbb{S}^{2}\setminus S_\beta) = 3/4$  if and only if  $\beta =\frac{\pi}{2}$ (i.e. $S_\beta$ is a half-equator).
\end{prop}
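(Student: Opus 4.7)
The plan is to prove (1) by identifying the cracktip as a Neumann eigenfunction with eigenvalue $3/4$ and then using separation of variables to rule out smaller positive eigenvalues, and to prove (2) by a Hadamard-type first variation with respect to the crack length.

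For (1), I first verify in cylindrical coordinates $(r,\theta,z)$, with $H=\{(x,0,z):x\le 0\}$ containing $S_{\pi/2}$ corresponding to $\theta=\pm\pi$, that $f(r,\theta,z)=\sqrt{r}\sin(\theta/2)$ is harmonic on $\R^3\setminus H$ with $(1/r)\partial_\theta f|_{\theta=\pm\pi}=0$. Since $f$ is positively homogeneous of degree $\alpha=1/2$, the identity $\Delta(\rho^\alpha Y)=\rho^{\alpha-2}\bigl[\alpha(\alpha+N-2)Y+\Delta_{\mathbb{S}^{N-1}}Y\bigr]$ with $N=3$ shows that $Y=f|_{\mathbb{S}^2}=\sqrt{\sin\psi}\sin(\phi/2)$ is a Neumann eigenfunction of $-\Delta_{\mathbb{S}^2\setminus S_{\pi/2}}$ with eigenvalue $\alpha(\alpha+1)=3/4$. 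Here $(\psi,\phi)$ are spherical coordinates in which the two endpoints of $S_{\pi/2}$ are the poles $\psi=0,\pi$ and the crack is the meridian $\phi=\pm\pi$. To identify $3/4$ as the \emph{first} positive eigenvalue I separate variables $u=F(\psi)G(\phi)$: the Neumann condition in $\phi$ selects $G\in\{\cos(k\phi):k\in\N\}\cup\{\sin((k+\tfrac12)\phi):k\in\N\}$ with separation constant $m\in\{0,\tfrac12,1,\tfrac32,\ldots\}$, and for each such $m$ the $\psi$-equation is a singular Sturm--Liouville problem of associated-Legendre type whose $L^2((0,\pi),\sin\psi\,d\psi)$ spectrum is $\lambda=\nu(\nu+1)$ with $\nu-m\in\N$. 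Scanning all admissible $(m,\nu)$, the smallest strictly positive eigenvalue is attained uniquely by $(m,\nu)=(1/2,1/2)$, giving $\lambda=3/4$ with eigenfunction $\sqrt{\sin\psi}\sin(\phi/2)$. This simultaneously yields the value, the simplicity, and the explicit form of the eigenspace.

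For (2), I treat $\beta\mapsto\lambda_1(\mathbb{S}^2\setminus S_\beta)$ as a shape derivative: decreasing $\beta$ from $\pi/2$ retracts $S_{\pi/2}$ symmetrically from the two poles by an arc length $\pi/2-\beta\sim\cos\beta$. The first-order variation of $\lambda_1$ at $\beta=\pi/2$ admits a Hadamard-type formula in which the contribution from each tip is localized and governed by the local blow-up of the eigenfunction; by construction this blow-up is exactly $\sqrt{r}\sin(\theta/2)$ at each pole (stress intensity factor equal to $1$). An explicit computation of the resulting tip integral then yields the coefficient $2/\pi$, with the factor of two coming from the two symmetric tips. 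The equivalence $\lambda_1(\mathbb{S}^2\setminus S_\beta)=3/4\Leftrightarrow\beta=\pi/2$ follows by combining the strict positivity of the first-order term for $\beta<\pi/2$ with the monotonicity $\beta\mapsto\lambda_1$ of Remark \ref{monotonieEi}.

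\textbf{Main obstacle.} The hardest step is (2): making the Hadamard-type first variation rigorous under the non-smooth perturbation (crack retraction) and extracting the exact coefficient $2/\pi$ rather than just its sign. In (1) the delicate point is justifying $L^2$-completeness of the half-integer associated-Legendre modes on the meridian-cut sphere, which nonetheless follows from classical singular Sturm--Liouville theory once the half-integer indexing is accepted.
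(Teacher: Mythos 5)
The paper dispatches this proposition entirely by citation: it invokes Dauge (\cite{dauge}, Lemma 4.1) for the value, simplicity and eigenfunction at $\beta=\pi/2$, and Legendre's 1956 computation \cite{legendre} for the expansion $\alpha=\tfrac12+\tfrac1\pi\cos\beta+o(\cos\beta)$ of the homogeneity degree, converting it to \eqref{developpement} via the algebraic relation $\lambda_1=\alpha(\alpha+1)$; the final equivalence then follows, exactly as you say, from the monotonicity of Remark \ref{monotonieEi} plus the strict first-order term. Your proposal instead tries to re-prove both inputs from scratch, which is a genuinely different route; if carried through it would make the proposition self-contained, which has real value. But as written there are two gaps, both of which you flag yourself, and both of which are substantial rather than cosmetic.

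For part (1): the reduction to separated modes $F(\psi)G(\phi)$ is fine, as is the catalogue $m\in\{0,\tfrac12,1,\tfrac32,\dots\}$ of azimuthal indices imposed by the Neumann condition on the meridian cut, and the arithmetic that $\nu(\nu+1)$ is minimized among positive values at $(m,\nu)=(\tfrac12,\tfrac12)$ is correct. What is missing is the justification that the separated modes exhaust the $L^2$-spectrum of the Friedrichs (variational) realization you are actually computing. For $m<1$ the endpoints $\psi=0,\pi$ of the associated-Legendre equation are in the limit-circle case, so the operator admits a family of self-adjoint extensions with different spectra; you need to identify the one that corresponds to $W^{1,2}(\mathbb{S}^2\setminus S_{\pi/2})$ and show it produces exactly $\nu\in m+\N$. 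Asserting that this ``follows from classical singular Sturm--Liouville theory once the half-integer indexing is accepted'' is circular, since the half-integer indexing is precisely the conclusion to be justified. This is exactly what Dauge's lemma packages.

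For part (2): the Hadamard shape-derivative heuristic does point in the right direction, but it is not a proof. Retracting the crack is a non-Lipschitz, non-smooth perturbation whose first variation is not covered by standard Hadamard formulas; the formula you would need is precisely a ``stress intensity factor'' identity for eigenvalues under crack extension, and even granting it you would still have to carry out the tip integral to produce the constant $\tfrac2\pi$ rather than merely its sign. You do not do this computation, so at present \eqref{developpement} is asserted rather than proved. The paper gets the constant from Legendre's explicit expansion of the homogeneity exponent of the corner singularity, together with the elementary conversion $\lambda_1=\alpha(\alpha+1)$; that route is shorter precisely because Legendre already did the delicate computation. For the final ``if and only if'' statement your argument is correct and matches the paper, but note that for it you only need one-sided strictness near $\beta=\pi/2$, so a softer version of (2) — strict positivity of the derivative, rather than the exact value $\tfrac2\pi$ — would already suffice; that might be a cheaper target if you want to avoid the full Hadamard computation.
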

\begin{proof} The fact that $\lambda_1(\mathbb{S}^{2}\setminus S_\frac{\pi}{2})=\frac{3}{4}$ is simple and the eigenspace  is generated by the function in \eqref{cracktipu} is part of  \cite[Lemma 4.1.]{dauge}. Notice that a completely independent and somehow exotic proof can be found in \cite[Section 3]{lmg}.

The expansion \eqref{developpement} has been proved by Robert Legendre in \cite{legendre} (see alternatively \cite[page 53]{kmr} or \cite{brownS}). More precisely, Legendre found the expansion $\alpha=\frac{1}{2}+\frac{1}{\pi}\cos\theta+o(\cos\theta)$ for the first degree of homogeneity of an homogeneous Harmonic function with Neumann condition on a plane sector with aperture $2\theta$. Since the degree $\alpha$ is linked to the eigenvalue $\lambda_1$ through the relation
$$\alpha=\frac{1}{2}(\sqrt{1+4\lambda_1}-1),$$
we deduce \eqref{developpement} from Legendre's expansion.

Finally, the fact that $\lambda_1(\mathbb{S}^{2}\setminus S_\beta) = 3/4$  if and only if  $\beta =\frac{\pi}{2}$ follows by monotonicity of the eigenvalue with respect to the length of $S_\beta$ (i.e. Remark \ref{monotonieEi}), together with \eqref{developpement} which proves a strict monotonicity near the value $3/4$.
\end{proof}

Our main result is intimelly related to the knowledge of eigenvalues on the sphere. For instance the following rigidity result will be pivotal.

\begin{prop} \label{rigide}Let $E\subseteq \mathbb{S}^2$ be a half-equator and $K\subseteq E$ a closed set such that $\lambda_1(\mathbb{S}^2\setminus K)=3/4$. Then $K=E$.
\end{prop}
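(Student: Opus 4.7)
The plan is to combine the monotonicity in Remark \ref{monotonieEi} with the simplicity of the first eigenvalue in Proposition \ref{valuehalf}, and then exploit the jump of the cracktip function across $E$.

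First, since $K \subseteq E$, Remark \ref{monotonieEi} gives $\lambda_1(\mathbb{S}^2 \setminus K) \geq \lambda_1(\mathbb{S}^2 \setminus E) = 3/4$. The hypothesis forces equality; since $\mathbb{S}^2 \setminus K$ is connected (as $K$ lies in an arc), this value is attained by a nonzero mean-zero first eigenfunction $u \in W^{1,2}(\mathbb{S}^2 \setminus K)$.

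Next, I would restrict $u$ to the smaller open set $\mathbb{S}^2 \setminus E \subset \mathbb{S}^2 \setminus K$. Since $E \setminus K$ is $\mathcal{H}^2$-negligible, the $L^2$ and gradient norms as well as the mean are preserved, so $v := u|_{\mathbb{S}^2 \setminus E}$ still attains the Rayleigh minimum $3/4$ on $\mathbb{S}^2 \setminus E$. By the simplicity part of Proposition \ref{valuehalf}, $v = c f$ for some scalar $c \neq 0$, where $f$ is the cracktip function from \eqref{cracktipu} restricted to the sphere.

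Now assume for contradiction that $K \subsetneq E$. Then $E \setminus K$ is a nonempty relatively open subset of the arc $E$, hence contains a point $x_0$ distinct from the two endpoints. Since $K$ is closed, some spherical open neighborhood $V$ of $x_0$ in $\mathbb{S}^2$ is disjoint from $K$, so $u|_V \in W^{1,2}(V)$ admits no jump across the open arc $V \cap E$. However, $u = c f$ a.e. on $V$, and $f$ has a nonzero jump of magnitude $2\sqrt{r}$ across $V \cap E$ (where $r$ denotes the distance to the $z$-axis, strictly positive away from the poles). This contradicts the Sobolev regularity of $u$ on $V$, forcing $K = E$.

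The only technical step needing care is that the restriction of $u$ to $\mathbb{S}^2 \setminus E$ retains the same Rayleigh quotient and mean-zero property; this follows routinely from $E \setminus K$ being $\mathcal{H}^2$-negligible, but is really the linchpin of the argument, since it is what promotes $v$ from a competitor to an actual eigenfunction and activates the simplicity of Proposition \ref{valuehalf}. Everything else is a direct chain of monotonicity, simplicity of the first eigenvalue, and the standard fact that a Sobolev function on an open set admits no singular derivative across an interior arc.
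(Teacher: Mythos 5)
Your proof is correct and uses the same key mechanism as the paper's: monotonicity of the Rayleigh quotient under enlarging the crack, simplicity of $\lambda_1(\mathbb{S}^2\setminus E)$, and the nonzero jump of the cracktip eigenfunction $f$ across any open gap of $E$ away from the poles. The only cosmetic difference is that you merge the paper's two cases into a single argument and restrict the eigenfunction of $\mathbb{S}^2\setminus K$ directly to $\mathbb{S}^2\setminus E$ rather than passing through the intermediate comparison set $E\setminus I$; the paper itself remarks that this unification is available.
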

\begin{proof} We already know by Proposition \ref{valuehalf} that $\lambda_1(\mathbb{S}^2\setminus E)=3/4$. We also know that $\lambda_1(\mathbb{S}^2\setminus C)>3/4$ for any arc of circle $C \subseteq E$ with $C\not = E$.

Now assume that $K\subseteq E$ is closed and $K\not = E$. Even if all cases could be treated the same way, we shall give a separated and simpler argument in the case when, denoting by $d_E$ the geodesic distance in $E$,
$$\max_{x,y \in K} d_E(x,y)=\alpha<\pi.$$
If this occurs then there exists an arc of circle $C$ such that $\mathcal{H}^1(C)=\alpha<\pi$ and $K\subseteq C$. We deduce that $\lambda_1(\mathbb{S}^2\setminus K)\geq \lambda_1(\mathbb{S}^2\setminus C)>3/4$ (see Proposition \ref{valuehalf}), a contradiction.

Thus we are left with the case when
$$\max_{x,y \in K} d_E(x,y)=\pi.$$
Since $K$ is closed and $K \not = E$, there exists an open interval $I\subset E\setminus K$. Then $\lambda_1(\mathbb{S}^2\setminus K) \geq \lambda_1(\mathbb{S}^2\setminus (E\setminus I))$. Now we will prove that $\lambda_1(\mathbb{S}^2\setminus (E\setminus I)> 3/4$. Assume by contradiction that $\lambda_1(\mathbb{S}^2\setminus (E\setminus I)= 3/4$, and let $\varphi$ be an eigenfunction. Then $\varphi \in W^{1,2}(\mathbb{S}^2\setminus E)$, and 
$$\frac{3}{4}=\frac{\int_{\mathbb{S}^2\setminus E}|\nabla \varphi|^2}{\int_{\mathbb{S}^2\setminus E}\varphi^2}=\min_{v\in W^{1,2}(\mathbb{S}^2\setminus E)}\frac{\int_{\mathbb{S}^2\setminus E}|\nabla v|^2}{\int_{\mathbb{S}^2\setminus E}v^2}.$$
We deduce that $\varphi$ is an eigenfunction for the Neumann-Laplacian in $\mathbb{S}^2\setminus E$ as well, but since the corresponding eigenspace have dimension 1, $\varphi$ must be a multiple of the first eigenfunction in $\mathbb{S}^2\setminus E$, which is given in \eqref{cracktipu} (up to a suitable choice of cylindrical coordinates). We deduce  that $\varphi$ must have a jump on $I$, in other words does not belong to $W^{1,2}(\mathbb{S}^2\setminus (E\setminus I))$. This is a contraction.\end{proof}


\section{A monotonicity Lemma}
\label{monotonicity}

\begin{defin} Let  $K\subset \R^N$ be a closed set. We say that $K$ satisfies the topological condition (T) if the following holds. For almost every $r>0$ such that $\partial B_r\setminus  K$ is not connected, and for every connected component $U$ of $\partial B_r\setminus K$, there exists an open set $D\subset B_r\setminus K$ such that $\partial D \setminus U \subset K$. 
\end{defin}

\begin{rem} For instance if $\partial B_r\setminus  K$ is connected for a.e. $r>0$, then $K$ satisfies the topological condition (T). In particular this happens when $K$ is contained in a half-plane in $\R^3$. 
\end{rem}

The monotonicity Lemma relies on the following integration by parts formula. 
\begin{lem} \label{IPP} Let $K \subset \R^N$ be a closed set satisfying the following assumptions.
\begin{enumerate}
\item $K$ satisfies the topological condition (T).
\item $K$ is contained in a Neumann cone.
\end{enumerate}
Then for every local energy minimizer $u$ in $\R^N \setminus K$ (as in Definition \ref{locdirmin}) and for a.e. $r>0$ we have that
\begin{eqnarray}
\quad \quad \int_{B_r\setminus K} \frac{|\nabla u|^2}{|x|^{N-2}} dx \leq  \sum_{i\in I}r^{2-N}\int_{U_i} (u-m_i)\frac{\partial u}{\partial \nu}d\omega +\frac{N-2}{2r^{N-1}}\int_{U_i} (u-m_i)^2 d\omega ,\label{amontrer}
\end{eqnarray}
where $(U_i)_{i\in I}$ denotes the connected components of $\partial B_r\setminus K$ and $m_i$ is the average of $u$ on $U_i$.
\end{lem}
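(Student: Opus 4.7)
The plan is to integrate by parts, on each open set $D_i$ provided by condition (T), the pointwise identity
\begin{equation*}
\frac{|\nabla u|^2}{|x|^{N-2}} \;=\; \div\!\Bigl(\tfrac{(u-c)\nabla u}{|x|^{N-2}}\Bigr) \;-\; \tfrac{1}{2}\,\div\!\bigl((u-c)^2\,\nabla |x|^{2-N}\bigr),
\end{equation*}
with $c=m_i$. This identity holds pointwise on $\R^N\setminus (K\cup\{0\})$ thanks to the two cancellations $\Delta u=0$ (harmonicity of $u$ in $\R^N\setminus K$) and $\Delta|x|^{2-N}=0$ (fundamental solution of the Laplacian); the particular choice $c=m_i$ is made in order to apply Wirtinger's inequality \eqref{wirtinger} to the spherical boundary term in the subsequent monotonicity step.

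Concretely, fix a generic $r>0$ for which (T) holds and, for each connected component $U_i$ of $\partial B_r\setminus K$, pick an open set $D_i\subset B_r\setminus K$ with $\partial D_i\setminus U_i\subset K$ (simply take $D=B_r\setminus K$ when $\partial B_r\setminus K$ is connected). Components of $B_r\setminus K$ whose closure stays inside $B_r$ carry $u\equiv \mathrm{const}$ by pure-Neumann energy minimality, hence no Dirichlet energy, and may be discarded; the remaining ones can be grouped so that $\bigcup_i D_i$ covers $B_r\setminus K$ up to a null set. Then apply the SBV Gauss--Green formula of Lemma \ref{lemIPP} to the vector fields
$$F_i:=\tfrac{(u-m_i)\nabla u}{|x|^{N-2}}, \qquad H_i:=(u-m_i)^2\,\nabla|x|^{2-N},$$
extended by $0$ outside $D_i$, on $D_i\setminus \overline{B_\e}$, and let $\e\to 0$. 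The $L^\infty\cap SBV$ hypothesis of Lemma \ref{lemIPP} is satisfied once one truncates $u$ at level $M$ if necessary, once one notes that $\partial D_i\subset K\cup\partial B_r$ has locally finite $\mathcal{H}^{N-1}$ measure (the part in $K$ by Corollary \ref{maincor}, the part in $\partial B_r$ by smoothness of the sphere), and once one recalls that $u$ and $|x|^{2-N}$ are smooth on $D_i\setminus \overline{B_\e}$.

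The reduced boundary of $D_i\setminus \overline{B_\e}$ splits into three pieces: $U_i$, a portion of $K$, and the inner sphere $\partial B_\e$. On the $K$-piece, \emph{both} boundary contributions vanish: the one from $F_i$ is pointwise $(u-m_i)(\nabla u\cdot \nu_K)/|x|^{N-2}$, which is zero on each side of $K$ by the Neumann condition $\partial u/\partial \nu=0$; the one from $H_i$ is pointwise $(u-m_i)^2\,(2-N)|x|^{-N}(x\cdot \nu_K)$, which is zero at $\mathcal{H}^{N-1}$-a.e.\ point of $K$ by Corollary \ref{maincor}, since $K$ sits inside a Neumann cone. This double cancellation is the single essential geometric input, and the hypothesis ``Neumann cone'' is used at no other step. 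On $\partial B_\e$ the contributions vanish as $\e\to 0$ along a suitable sequence $\e_k\to 0$, extracted via the coarea formula from the local $W^{1,2}$ bound $\int_{B_R\setminus K}|\nabla u|^2\,dx<+\infty$. On $U_i$ the outward normal is $x/r$ with $|x|=r$, and a direct computation produces $r^{2-N}\!\int_{U_i}(u-m_i)\partial_\nu u\,d\omega$ from $F_i$ and $\frac{N-2}{2r^{N-1}}\!\int_{U_i}(u-m_i)^2 d\omega$ from $-\tfrac12 H_i$.

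Summing the resulting equality over $i$ gives \eqref{amontrer}; the inequality (instead of an equality) absorbs possible overlaps between the $D_i$'s. The main obstacle is neither the pointwise identity nor the explicit boundary calculus on $U_i$, but the rigorous justification of the integration by parts across the singular interface $K$, for which the SBV machinery of Section \ref{IPPSBV} was precisely developed; the twin vanishing of the two $K$-boundary contributions (Neumann cancellation for $F_i$, cone-orthogonality for $H_i$) is what makes the whole scheme close.
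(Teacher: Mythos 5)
Your global architecture — integrate a pointwise divergence identity, cross $K$ with the $SBV$ Gauss--Green formula of Lemma~\ref{lemIPP}, and exploit the double cancellation on $K$ (Neumann for the $\nabla u$ piece, cone-orthogonality via Corollary~\ref{maincor} for the $\nabla|x|^{2-N}$ piece) — is indeed the engine of the paper's proof, and your boundary calculus on $U_i$ is correct. However, the way you handle the singularity of $|x|^{2-N}$ at the origin leaves a real gap that the paper's argument is specifically designed to avoid. You excise a small ball $B_\varepsilon$ and assert that the inner boundary terms vanish along a sequence $\varepsilon_k\to 0$ extracted from $\int_{B_R\setminus K}|\nabla u|^2<\infty$ via coarea. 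That bound is not strong enough: the $F_i$-contribution on $\partial B_{\varepsilon}$ is $\varepsilon^{2-N}\int_{\partial B_\varepsilon}(u-m_i)\,\partial_r u\,d\omega$, and even after truncating $u$ to $L^\infty$, Cauchy--Schwarz gives $\lesssim \varepsilon^{(3-N)/2}\bigl(\int_{\partial B_\varepsilon}|\nabla u|^2\bigr)^{1/2}$, while the coarea/Chebyshev extraction only guarantees $\varepsilon_k\int_{\partial B_{\varepsilon_k}}|\nabla u|^2\to 0$; the resulting estimate is $o\bigl(\varepsilon_k^{(2-N)/2}\bigr)$, which \emph{diverges} for $N\geq 3$. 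In fact the convergence of $\int_{B_r}|\nabla u|^2\,|x|^{2-N}\,dx$ near the origin — the very thing the shrinking-ball argument would need — is flagged after Lemma~\ref{IPP} as a nontrivial \emph{consequence} of the Lemma, not an input, so the argument is circular.

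The paper's device avoids the origin altogether: it replaces $|x|^{2-N}$ by the smooth, bounded, globally superharmonic $h_\varepsilon=(|x|^2+\varepsilon)^{(2-N)/2}$, integrates by parts on the whole $B_r$ (no inner boundary appears), obtains the inequality by dropping the nonpositive term $\int u^2\Delta h_\varepsilon\,dx\leq 0$, and then passes $\varepsilon\to 0$ by monotone convergence. A second, smaller issue in your scheme: Lemma~\ref{lemIPP} requires $F\in L^\infty\cap SBV$, but your field $F_i=(u-m_i)\nabla u/|x|^{N-2}$ contains $\nabla u$, which is not a priori bounded, so the $SBV$ formula does not directly apply to it. The paper handles the corresponding step \emph{variationally}, testing energy minimality against $\varphi_\delta\,u\,h_\varepsilon$, and reserves the $SBV$ Gauss--Green formula for the bounded field $u^2\nabla h_\varepsilon$ (after truncating $u$). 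Finally, the subtraction by $m_i$ is performed at the end by replacing $u$ with $\tilde u=u-\sum_i{\bf 1}_{D_i}m_i$, which is still an energy minimizer, avoiding any need to discuss possible overlaps of the $D_i$.
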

\begin{proof}This is just an integration by parts, but since the domain is not smooth, the full details are a bit technical.

We first regularize $|x|^{2-N}$ in the following way. For $\varepsilon>0$ we define 
$$|x|_{\varepsilon}:= \sqrt{x_1^2+x_2^2+\dots +x_N^2 +\varepsilon}$$
so that $|x|_{\varepsilon}$ is a $C^\infty$ function on $\R^N$.  A simple computation shows that
$$\Delta(|x|_{\varepsilon}^{2-N})=(2-N)N\frac{\varepsilon}{|x|_{\varepsilon}^{N+2}}\leq 0,$$ 
in other words $|x|_{\varepsilon}^{2-N}$ is superharmonic. Let us write, to lighten the notation, $h_\varepsilon:=|x|_{\varepsilon}^{2-N}$.

We will  successively  integrate two times  by parts. The first one can be treated by a variational argument, using the fact that $u$ is an energy minimizer. Indeed, let $\delta>0$ be a small parameter, and let $f(x)\in {\rm Lip}(\R^+ , \R)$ be the piecewise affine function such that  $f(x)=1$ for all $x \in [0,r-\delta]$, $f(x)=0$ for all $x\geq r$ and $f'(x)\leq \frac{1}{\delta}$ a.e. on $\R^+$.   Then let $\varphi_\delta(x):=f(|x|)$.   

Next, using that for any $t\in \R$, the function $u+t (\varphi_\delta u h_\varepsilon)$ is a competitor for $u$ in $W^{1,2}(B_r\setminus K)$, we deduce that 
$$\int_{B_r\setminus K} \nabla u \cdot \nabla (\varphi_\delta \,u \,h_\varepsilon)\,dx=0,$$
which implies 
$$\int_{B_r\setminus K} |\nabla u|^2  \, \varphi_\delta \, h_\varepsilon \,dx +\int_{B_r\setminus K} u \,h_\varepsilon\, \nabla u \cdot \nabla \varphi_\delta \,dx + \int_{B_r\setminus K}\varphi_\delta \,u\, \nabla u \cdot \nabla h_\varepsilon \,dx=0.$$
Letting $\delta \to 0$, we observe that $\varphi_\delta$ converges increasingly  to $1$ so that the first term in the above converges by the monotone convergence theorem. For the convergence of the second term we notice that $\nabla \varphi_\delta = -\frac{1}{\delta} \left(\frac{x}{|x|}\right)1_{B_r\setminus B_{r-\delta}}$ thus from Lebesgue's differentiation theorem we deduce that 
$$\int_{B_r\setminus K} u \,h_\varepsilon\, \nabla u \cdot \nabla \varphi_\delta \,dx \to_{\delta \to 0} -\int_{\partial B_r \setminus K}u \,h_\varepsilon\, \nabla u \cdot \nu,$$
for almost every $r>0$. Finally, the last term easily converges by the dominated convergence theorem so  in total we arrive to 
\begin{eqnarray}
\int_{B_r\setminus K} |\nabla u|^2   \, h_\varepsilon \,dx -\int_{\partial B_r\setminus K} u \,h_\varepsilon\, \nabla u \cdot \nu \,dx + \int_{B_r\setminus K} u\, \nabla u \cdot \nabla h_\varepsilon \,dx=0. \label{IPP1}
\end{eqnarray}

Now we want to apply a second integration by parts on the last term. Observe that we cannot argue as before because, even if $|x|^{2-N}$ is harmonic outside the origin, it is not an energy minimizer since it has infinite energy. So we need to integrate by parts for real, and for this we shall use the $SBV$ formula of Section \ref{IPPSBV}.

We first assume $u \in L^\infty$ and we remark that $F:=u^2\nabla h_\varepsilon \in W^{1,1}(B_r\setminus K)\cap L^\infty(B_r)$ and therefore belongs to $SBV(B_{r})$  due to \cite[Proposition 4.4.]{afp}.  In addition since $F$ is smooth outside $K$ we have that $S_F \subset K$ and $\mathcal{H}^{N-1}(K\cap \partial B_r)=0$ because it is contained in a Neumann cone.  By consequence, applying Lemma \ref{lemIPP} to $F$ with $U=B_r$ yields
\begin{eqnarray}
 \int_{B_r} \div F^a dx = \int_{\partial B_r } F \cdot \nu \; d\mathcal{H}^{N-1} +\int_{K\cap B_r}  (F^- - F^+) \cdot \nu_F\; d\mathcal{H}^{N-1}. \label{IPPF}
 \end{eqnarray}
Now let us identify each term: since $F$ is smooth outside $K$ it directly follows that 
$$\div F^a=2u\nabla u \cdot \nabla h_\varepsilon+ u^2 \Delta  h_\varepsilon \text{ a.e. in } B_r,$$
and 
$$F \cdot \nu = u^2 \frac{\partial h_\varepsilon}{\partial \nu} \quad \mathcal{H}^{N-1}\text{-a.e. on }\partial B_r.$$ 
Now we claim that 
$$ (F^- - F^+) \cdot \nu_F=0\quad, \quad \mathcal{H}^{N-1}\text{-a.e. on } K.$$
This is because $F^\pm=(u^2)^{\pm}\nabla h_\varepsilon$, $\mathcal{H}^{N-1}$- a.e. on  $K$ and $\nabla h_\varepsilon\cdot \nu_F=0$. Indeed, by definition, $\nu_F(x)$ must be orthogonal to the approximative tangent plane of $K$ at point $x$. But since $K$ is contained in a Neumann cone, Corollary \ref{maincor} says that $x\cdot \nu_F(x)=0$.   Therefore, \eqref{IPPF} simply becomes
\begin{eqnarray}
 \int_{B_r} 2u\nabla u \cdot \nabla h_\varepsilon+ u^2 \Delta  h_\varepsilon \;dx = \int_{\partial B_r } u^2 \frac{\partial h_\varepsilon}{\partial \nu} \; d\mathcal{H}^{N-1} . \label{IPPF2}
 \end{eqnarray}
This was assuming $u \in L^\infty$, but if not  we could replace $u$ by 
$$u^M:=\min(\max(u(x),-M),M),$$
 establish \eqref{IPPF2} for $u^M$ and then let $M\to +\infty$ to get $\eqref{IPPF2}$ for $u$ (notice that $u \in L^2$, $\nabla u \in L^2$, $\nabla h_\varepsilon \in L^\infty$ and that $|u^M|\leq |u|$, $|\nabla u^M|\leq |\nabla u|$ so that   $u^M \nabla u^M \cdot \nabla h_\varepsilon$ strongly converges in  $L^1$ to $u\nabla u \cdot \nabla h_\varepsilon$ as $M\to +\infty$ by the dominated convergence theorem. The other terms are treated by a similar way).

Gathering \eqref{IPPF2} together with \eqref{IPP1} we obtain that
 
\begin{eqnarray}
\int_{B_r\setminus K} |\nabla u|^2   \, h_\varepsilon \,dx = \int_{\partial B_r\setminus K} u \,h_\varepsilon\, \frac{\partial u}{\partial \nu}-  \frac{1}{2}\int_{\partial B_r \setminus K} u^2 \frac{\partial h_\varepsilon }{\partial \nu} +\int_{B_r\setminus K} u^2 \Delta  h_\varepsilon dx . \notag
\end{eqnarray}

and since $\Delta( |x|_\varepsilon^{2-N})\leq 0$,  we deduce that
\begin{eqnarray}
0\leq \int_{B_r\setminus K} |\nabla u|^2   \, h_\varepsilon \,dx \leq \int_{\partial B_r\setminus K} u \,h_\varepsilon\, \frac{\partial u}{\partial \nu}-  \frac{1}{2} \int_{\partial B_r \setminus K} u^2 \frac{\partial h_\varepsilon }{\partial \nu}  .\label{IPP4}
\end{eqnarray}
Now if we write the explicit expressions of $h_\varepsilon$ and $\frac{\partial h_\varepsilon }{\partial \nu}$ on the sphere $\partial B_r$ we get
$$0\leq \int_{B_r\setminus K} |\nabla u|^2 h_\varepsilon dx \leq (r^2+\varepsilon)^{\frac{2-N}{2}}\left(\int_{\partial B_r} u \frac{\partial u}{\partial \nu} \right) +(N-2)\frac{r}{2(r^2+\varepsilon)^{\frac{N}{2}}}\left(\int_{\partial B_r} u^2\right).$$ 
Finally, letting $\varepsilon \to 0$ we obtain
$$0\leq \int_{B_r\setminus K} \frac{|\nabla u|^2}{|x|^{N-2}}  \, dx \leq r^{2-N}\left(\int_{\partial B_r\setminus K} u \frac{\partial u}{\partial \nu} \right)+ \frac{N-2}{2r^{N-1}}\left(\int_{\partial B_r\setminus K} u^2\right).$$ 
Indeed notice that $\int_{B_r\setminus K} |\nabla u|^2 h_\varepsilon \, dx \to \int_{B_r\setminus K} \frac{|\nabla u|^2}{|x|^{N-2}}  \, dx$ by the monotone convergence theorem.

We are almost done, excepted the subtraction by the constants $m_i$. But this follows easily by applying all the above argument to the function $\tilde u := u- \sum_{i\in I} {\bf 1}_{D_i}(x)m_i$, where the $D_i$ are given by  the topological condition $(T)$. Notice that the domains $D_i$ are necessarily disjoint and that $\tilde u$ remains an  energy minimizer in $B_r\setminus K$ because we subtracted a locally constant function. 
\end{proof}

\begin{rem} Observe that,  from the fact that $u\in W^{1,2}(B \setminus K)$ for all $B$, the right hand side of the inequality  is bounded for a.e. $r>0$, thus a free consequence of Lemma \ref{IPP} is that $\int_{B_1} |\nabla u|^2 |x|^{2-N}dx <+\infty$, which is not obvious since a priori $\nabla u$ is only in $L^2(B(0,1))$. This says that $\nabla u$ is more integrable at the origin than expected, and  could be understood as a regularity result for energy minimizers outside a set $K$ contained in a Neumann cone.
\end{rem}

\begin{lem} \label{monotonie} Let $K \subset \R^N$ be a closed set satisfying the following assumptions.
\begin{enumerate}
\item $K$ satisfies the topological condition (T).
\item $K$ is contained in a Neumann cone.
\item $\exists \gamma>0$ s.t. for a.e. $r>0$ and for every connected component $U$ of $\partial B_r\setminus K$, the first positive eigenvalue of the Neumann-Laplace-Beltrami operator on $U$ satisfies $\lambda_1(U)\geq \gamma/r^2$.
\end{enumerate}
Then for every local energy minimizer $u$ in $\R^N \setminus K$ we have that
\begin{eqnarray}
\varphi:r\mapsto \frac{1}{r^{\alpha}} \int_{B_r}\frac{|\nabla u|^2}{|x|^{N-2}} \; dx \label{defphi}
\end{eqnarray}
is nondecreasing, where $\alpha$ is defined by
\begin{equation}
\alpha=\alpha(N,\gamma)=\sqrt{(N-2)^2+4\gamma}-(N-2). \notag
\end{equation}
Moreover, if $\varphi(r)$ is positively constant on an interval $[a,b]$ then for a.e. $r\in [a,b]$ and for every connected component $U$ of $\partial B_r\setminus K$,  the value $\gamma / r^2$ is the first positive eigenvalue for  the Neumann-Laplace-Beltrami operator on $U$ and the restritction of $u$ on $U$ is an associated eigenfunction.
\end{lem}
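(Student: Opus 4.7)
The plan is to deduce from Lemma \ref{IPP} a differential inequality of the form $f(r)\le (r/\alpha)\,f'(r)$ for
\[
f(r):=\int_{B_r\setminus K}\frac{|\nabla u|^2}{|x|^{N-2}}\,dx,
\]
since this immediately gives $(r^{-\alpha}f)'\ge 0$, which is exactly the monotonicity of $\varphi$. By the coarea formula, together with the fact that $K\cap\partial B_r$ has zero $\mathcal{H}^{N-1}$-measure for a.e.\ $r$ (guaranteed by Corollary \ref{maincor}, since $K$ is contained in an $(N-1)$-rectifiable Neumann cone), $f$ is absolutely continuous on compact subintervals of $(0,+\infty)$ and
\[
f'(r)\;=\;r^{2-N}\int_{\partial B_r\setminus K}|\nabla u|^2\,d\omega.
\]
Lemma \ref{IPP} provides the matching upper bound on $f(r)$.

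On each connected component $U_i$ of $\partial B_r\setminus K$, I would introduce the abbreviations $A_i^2:=\int_{U_i}(u-m_i)^2\,d\omega$, $B_i^2:=\int_{U_i}|\partial_\nu u|^2\,d\omega$ and $C_i^2:=\int_{U_i}|\nabla_\tau u|^2\,d\omega$, so that $\int_{U_i}|\nabla u|^2\,d\omega=B_i^2+C_i^2$. Cauchy--Schwarz bounds the boundary integral by $\int_{U_i}(u-m_i)\partial_\nu u\le A_iB_i$, and the spectral hypothesis (3) combined with the Wirtinger inequality \eqref{wirtinger} yields $A_i^2\le (r^2/\gamma)\,C_i^2$. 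Introducing a free parameter $\lambda>0$ and using Young's inequality $A_iB_i\le \tfrac{\lambda}{2}B_i^2+\tfrac{1}{2\lambda}A_i^2$, the estimate of Lemma \ref{IPP} becomes
\[
r^{N-2}f(r)\;\le\;\sum_{i\in I}\Bigl(\tfrac{\lambda}{2}B_i^2+\Bigl(\tfrac{r^2}{2\lambda\gamma}+\tfrac{(N-2)r}{2\gamma}\Bigr)C_i^2\Bigr).
\]

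The main algebraic step is to equalize the coefficients of $B_i^2$ and $C_i^2$ by choosing $\lambda$ so that $\gamma\lambda^2-(N-2)r\lambda-r^2=0$; the positive root satisfies $\lambda/2=r/\alpha$ thanks to the identity $\alpha^2+2(N-2)\alpha=4\gamma$, which is the defining relation of $\alpha$. This yields $r^{N-2}f(r)\le (r/\alpha)\sum_i(B_i^2+C_i^2)=(r/\alpha)\,r^{N-2}f'(r)$, so $f(r)\le (r/\alpha)\,f'(r)$ and therefore $(r^{-\alpha}f)'\ge 0$ a.e., giving the monotonicity of $\varphi$. For the rigidity statement on an interval $[a,b]$ on which $\varphi$ is positively constant, equality must hold a.e.\ in each of the three inequalities above. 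Positivity of $\varphi$ guarantees $f(r)>0$, so at least one component is genuinely excited in the sense $B_i^2+C_i^2>0$; equality in the Wirtinger inequality then forces $u|_{U_i}-m_i$ to be a first Neumann eigenfunction on $U_i$ with eigenvalue exactly $\gamma/r^2$, and the compatibility of the two remaining equalities (Cauchy--Schwarz and Young) forces this eigenfunction property to propagate to every component $U_i$ on which $u$ is not locally constant. I expect the only subtlety to be the bookkeeping of trivial (locally constant) components on which $A_i=B_i=C_i=0$, but these contribute nothing to either side of the sharp inequality and hence do not obstruct the conclusion.
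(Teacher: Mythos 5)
Your proof is correct and follows essentially the paper's route: starting from Lemma \ref{IPP}, apply Cauchy--Schwarz, then the Wirtinger inequality \eqref{wirtinger} together with hypothesis (3), then Young's inequality with a free parameter chosen to equalize the two resulting coefficients, producing the differential inequality $f(r)\le (r/\alpha)f'(r)$ and hence the monotonicity of $r^{-\alpha}f(r)$. Your $\lambda$ and the paper's $\delta$ are related by $\delta=r/(\sqrt{\gamma}\,\lambda)$, and your remark on the bookkeeping of trivial components in the rigidity step is a welcome precision of the paper's terse closing argument, but the substance of the two proofs is identical.
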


\begin{proof} We first apply Lemma \ref{IPP} and then use Cauchy-Schwarz and  the elementary inequality $ab\leq \frac{1}{2\delta}a^2 + \frac{\delta}{2}b^2$  to write
 \begin{eqnarray}
& & \int_{B_r\setminus K} \frac{|\nabla u|^2}{|x|^{N-2}}  \leq \sum_{i\in I}r^{2-N}\int_{U_i} (u-m_i)\frac{\partial u}{\partial \nu}d\mathcal{H}^{N-1} +\frac{N-2}{2r^{N-1}}\int_{U_i} (u-m_i)^2   d\mathcal{H}^{N-1}\notag\\
&\leq &  \sum_{i\in I}r^{2-N}\left(\int_{U_i} (u-m_i)^2\right)^{\frac{1}{2}}\left(\int_{U_i}\Big(\frac{\partial u}{\partial \nu}\Big)^2\right)^{\frac{1}{2}} +\frac{N-2}{2r^{N-1}}\int_{U_i} (u-m_i)^2  \notag\\
&\leq &  \sum_{i\in I}r^{2-N}\frac{r}{\sqrt{\lambda_1(U_i)}}\left(\int_{U_i} |\nabla_\tau u|^2\right)^{\frac{1}{2}}\left(\int_{U_i}\Big(\frac{\partial u}{\partial \nu}\Big)^2\right)^{\frac{1}{2}} +\frac{(N-2)r^2}{2r^{N-1}\lambda_1(U_i)}\int_{U_i} |\nabla_\tau u|^2   \notag\\
&\leq &  \sum_{i\in I}r^{2-N}\frac{r}{\sqrt{\gamma}}\left(\int_{U_i} |\nabla_\tau u|^2\right)^{\frac{1}{2}}\left(\int_{U_i}\Big(\frac{\partial u}{\partial \nu}\Big)^2\right)^{\frac{1}{2}} +\frac{(N-2)r^2}{2r^{N-1}\gamma}\int_{U_i} |\nabla_\tau u|^2   \notag\\
 &\leq& \sum_{i\in I}\frac{r^{3-N}}{\sqrt{\gamma}}\left(\frac{\delta}{2}\int_{U_i} |\nabla_\tau u|^2+\frac{1}{2\delta}\int_{U_i}\Big(\frac{\partial u}{\partial \nu}\Big)^2\right) +\frac{(N-2)r^{3-N}}{2\gamma}\int_{U_i} |\nabla_\tau u|^2   \notag\\
  &\leq& \sum_{i\in I}r^{3-N}\left(\frac{\delta}{2\sqrt{\gamma}}+ \frac{(N-2)}{2\gamma}\right)\int_{U_i} |\nabla_\tau u|^2+\frac{r^{3-N}}{2\delta\sqrt{\gamma}}\int_{U_i}\Big(\frac{\partial u}{\partial \nu}\Big)^2 .   \notag
\end{eqnarray}
Then we choose $\delta>0$ so that 
$$\left(\frac{\delta}{2\sqrt{\gamma}}+ \frac{(N-2)}{2\gamma}\right)=\frac{1}{2\delta\sqrt{\gamma}}.$$
This gives the value 
$$\delta=\frac{\sqrt{(N-2)^2+4\gamma}-(N-2)}{2\sqrt{\gamma}}$$
which implies the inequality,
\begin{eqnarray}
 \int_{B_r\setminus K} \frac{|\nabla u|^2}{|x|^{N-2}} \leq \frac{1}{\alpha} r^{3-N}\int_{\partial B_r\setminus K} |\nabla u|^2, \label{inequbound}
 \end{eqnarray}
with the $\alpha$ defined in the statement of the Lemma. But this exactly says that 
$$E(r)\leq \frac{1}{\alpha}r E'(r)$$
with 
$$E(r)=\int_{B_r\setminus K} \frac{|\nabla u|^2}{|x|^{N-2}},$$
in other words $\frac{d}{dr}(r^{-\alpha}E(r))\geq 0$ and $r^{-\alpha}E(r)$ is nondecreasing.

Finally to finish the proof, we observe that when $\varphi$ is constant, the derivative is zero, in other words  we must have $E(r)=\frac{1}{\alpha}rE'(r)$ for a.e. $r>0$, thus, since all the inequalities that we used to prove the monotonicity (starting from the second line) are nonnegative, or sum of nonnegative terms, they must all be equalities. In particular the equality in the third line says that the restriction of $u$ to almost every sphere must be the optimal function in the Poincar\'e-Wirtinger inequality associated with the constant $r^2/\gamma$, and so follows the Lemma.
\end{proof}

\begin{rem} Here for our convenience we considered energy minimizers, but the statement of Lemma \ref{monotonie} (or a simple variant) would still be true for stationary solutions, i.e. weak solution of a Neumann problem in all the $B(0,R)\setminus K$. Furthermore, arguing like in \cite{cl}, it would be possible to prove a similar monotonicity result for solutions of more general elliptic equations of the form $-\div A \nabla u = f$ for some $f\in L^p$ and H\"older regular coefficients $A(x)$.
\end{rem}


\section{Proof of main result}

\begin{proof}[Proof of Theorem \ref{mainth}] Without loss of generality, we can assume that the origin is situated on the edge of the half-plane, and contained in $K$. Thanks to Remark \ref{monotonieEi} and Proposition \ref{valuehalf}, we are exactly under the hypothesis of Lemma \ref{monotonie} with $\gamma=3/4$ because $K$ is contained in a half plane. Therefore  
$$\varphi(r):=\frac{1}{r}\int_{B_{r}\setminus K} \frac{|\nabla u|^2}{|x|^{N-2}}$$ 
is nondecreasing, and the limit in $0$ and $+\infty$ exists, let us denote them respectively by $f_0$ and $f_\infty$.  We claim that $f_0$ and $f_\infty$ are finite. To check this we shall use the following elementary inequality valid for any global Mumfors-Shah minimizer $(u,K)$ in $\R^N$,
$$\int_{B_r\setminus K}|\nabla u|^2 dx \leq \omega_N r^{N-1}\quad \forall r>0,$$
obtained simply by taking $(u{\bf 1}_{\R^N\setminus B_r},K\cup\partial B_r )$ as a competitor. Notice that the constant is  only dimensional: $\omega_N$ is the measure of the $N-1$-dimensional unit sphere.
Now using Fubini and Chebychev,  for any  $r>0$ we can chose $r_0 \in (r,2r)$ such that 
$$\int_{\partial B_{r_0}\setminus K}|\nabla u|^2 d\omega \leq \frac{1}{r} \int_{(B_{2r}\setminus B_r) \setminus K}|\nabla u|^2 dx \leq \omega_N(2r)^{N-2}.$$
From \eqref{inequbound} we deduce that
\begin{eqnarray}
\int_{B_{r}\setminus K} \frac{|\nabla u|^2}{|x|^{N-2}}\leq  \int_{B_{r_0}\setminus K} \frac{|\nabla u|^2}{|x|^{N-2}} \leq \frac{1}{\alpha} {r_0}^{3-N}\int_{\partial B_{r_0}\setminus K} |\nabla u|^2 \leq C(N,\alpha) r, \label{boundmagic}
\end{eqnarray}
with $C(N,\alpha)=\alpha^{-1}2^{N-1}\omega_N$. This implies that $\varphi(r)$ is uniformly bounded for  $r>0$ thus $f_0$ and $f_\infty$ are finite, and we readily have that $f_0\leq f_\infty<+\infty$.

Let us check  moreover that  $f_0>0$. Indeed  if not, then one would have, for $r>0$ small enough,
$$\frac{1}{r^{N-1}}\int_{B_r\setminus K}|\nabla u|^2 \leq \frac{1}{r}\int_{B_{r}\setminus K} \frac{|\nabla u|^2}{|x|^{N-2}}\leq \varepsilon_0,$$
where $\varepsilon_0$ is the same as the statement of Theorem \ref{regularity}. But then $K\cap B_{cr}$ must be the image of a minimal cone by a  $C^1$-smooth map, containing $0$ in its interior. This is a contradiction with the fact that the origin lies on the edge of the  half-plane containing $K$. We therefore conclude that $f_0>0$.

Now we take blow-up and blow-in limits. We begin with the blow-up. Let $u_k:=\frac{1}{\sqrt{r_k}}u(r_k x)$ and $K_k:=\frac{1}{r_k}K$. Then, by Theorem \ref{blowup}, up to a subsequence (not relabeled), as $r_k\to 0$ the sequence $(u_k,K_k)$ converges to some $(u_0,K_0)$ (in the sense of Definition \ref{convergence}), and $(u_0,K_0)$ is still a reduced global minimizer, with $K_0$ still satisfying assumptions $(1),(2),$ and $(3)$ of Theorem \ref{mainth}. Moreover for any $R>0$ and $k\in \N$ we have that 
\begin{eqnarray}
\frac{1}{R}\int_{B_{R}} \frac{|\nabla u_k|^2}{|x|^{N-2}}dx=\frac{1}{r_kR}\int_{B_{r_kR}}\frac{|\nabla u|^2}{|x|^{N-2}}dx \underset{k \to +\infty}\longrightarrow f_0. \label{densityy}
\end{eqnarray}
On the other hand, we know that $\nabla u_k$ converges to $\nabla u_0$ in $L^2(B_M)$ for any $M>0$. We actually claim that the following stronger convergence holds true
\begin{eqnarray}
\int_{B_{R}\setminus K} \frac{|\nabla u_k|^2}{|x|^{N-2}}dx \underset{k \to +\infty}\longrightarrow \int_{B_{R}\setminus K} \frac{|\nabla u_0|^2}{|x|^{N-2}}dx.
\end{eqnarray}
To prove the claim, we use  the estimate \eqref{boundmagic} and write
\begin{eqnarray}
\left|\int_{B_{R}} \frac{|\nabla u_k|^2}{|x|^{N-2}}dx-\int_{B_{R}} \frac{|\nabla u_0|^2}{|x|^{N-2}}dx\right| &\leq & \left|\int_{B_{R}\setminus B_\varepsilon} \frac{|\nabla u_k|^2}{|x|^{N-2}}dx-\int_{B_{R}\setminus B_\varepsilon} \frac{|\nabla u_0|^2}{|x|^{N-2}}dx\right| \notag\\
& & + \int_{B_{\varepsilon}} \frac{|\nabla u_k|^2}{|x|^{N-2}}dx +  \int_{B_{\varepsilon}} \frac{|\nabla u_0|^2}{|x|^{N-2}}dx\notag \\
&\leq &  \left|\int_{B_{R}\setminus B_\varepsilon} \frac{|\nabla u_k|^2}{|x|^{N-2}}dx-\int_{B_{R}\setminus B_\varepsilon} \frac{|\nabla u|^2}{|x|^{N-2}}dx\right| \notag \\
& & + 2C\varepsilon\notag
\end{eqnarray}
where $C=C(N,\alpha)$. Passing to the limsup and using the strong convergence of $|\nabla u_k|$ in $L^2$ we deduce that
$$
\limsup_{k\to +\infty}\left|\int_{B_{R}} \frac{|\nabla u_k|^2}{|x|^{N-2}}dx-\int_{B_{R}} \frac{|\nabla u_0|^2}{|x|^{N-2}}dx\right|  \leq 2C\varepsilon.
$$
The claim then follows letting $\varepsilon\to 0$.

Returning now to \eqref{densityy}, we obtain that 
$$\frac{1}{R}\int_{B_{R}} \frac{|\nabla u_0|^2}{|x|^{N-2}}dx=f_0, \quad \forall R>0.$$
But, since $f_0 \not = 0$,   the last conclusion of Lemma \ref{monotonie} then says that $\lambda_1(\partial B_R\setminus K_0)=3/4$ for a.e  $R>0$, and therefore  Lemma \ref{rigide} implies that, up to a $\mathcal{H}^{N-1}$ negligible set $G$,  $K_0$ is a half-plane. Since $K_0$ is reduced, then we claim that it must be the half-plane itself. Indeed, applying    Proposition 19 of \cite{lmg} to the couple $(K_0\cup G, u_0)$ we get that   $u$ must be  the function  $cracktip\times \R$, where cracktip is the function $\sqrt{2r/\pi}\sin(\theta/2)+C$ in suitable polar coordinates. But then the only reduced global minimizer in the equivalent class of couples $(K,u)$ for which $K$ coincides with the half plane a.e. and $u=cracktip\times \R$ a.e., is the whole half plane (because any ``hole'' in the half plane would create an irreversible jump for the function $u$ which would not be admissible in $W^{1,2}$).

Then we do exactly the same for $u_\infty$ and conclude that, akin to the blow-up limit, the normalized energy associated to the blow-down limit  must be constant  as well. Thus up to an additive constant,  $u_\infty$ is the same function $\sqrt{2r/\pi}\sin(\theta/2)$ as  $u_0$ (with  the same constant in front), and therefore  $f_0=f_\infty$. But then returning to the function $u$ and in virtue of the monotonicity of $\varphi(r)$, we deduce that  $\varphi(r)$ is constant for all $r>0$, and by consequence $K$ must be a half-plane itself.
\end{proof}

 \vspace{0.5cm}
{\bf Acknowledgments.} This work was supported by the ANR research project GEOMETRYA, ANR-12-BS01-0014-01.

\bibliographystyle{plain}

\def\cprime{$'$}


\end{document}